\newtheorem{theorem}{Theorem}[section]
\newtheorem{definition}{Definition}[section]
\newtheorem{corollary}[theorem]{Corollary}
\newtheorem{proposition}[theorem]{Proposition}
\newtheorem{remark}[theorem]{Remark}
\numberwithin{equation}{section}
\newcommand{\field}[1]{\mathbb{#1}}
                   \newcommand{\Z}{\field{Z}}
\newcommand{\C}{\field{C}}
\newcommand{\im}{\rm Im}
\renewcommand{\ker}{\textnormal{ker}}
\renewcommand{\im}{\textnormal{Im}}
\begin{document}
	
\title{The Schur multiplier  of some finite multiplicative Lie algebras}
	
\author{Amit Kumar$^{1}$, Deepak Pal$^{2}$, Seema Kushwaha$^{3}$ AND Sumit Kumar Upadhyay$^{4}$\vspace{.4cm}\\
{Department of Applied Sciences,\\ Indian Institute of Information Technology Allahabad\\Prayagraj, U. P., India} }

\thanks{$^1$amitkroor98@gmail.com, $^2$deepakpal5797@gmail.com, $^3$seema28k@gmail.com, $^4$upadhyaysumit365@gmail.com}
\thanks {2020 Mathematics Subject classification : 15A75, 19C09, 20F12}
	
%
	
	\keywords{Multiplicative Lie algebra, Schur multiplier, Exterior square, Commutator}

\begin{abstract}
The main aim of the article is to find the Schur multiplier and the Lie exterior square of some finite multiplicative Lie algebras. For a non abelian simple group $K$ with  trivial Schur multiplier,  we  see that the Schur multiplier of  multiplicative Lie algebra $K$ is trivial  and the Lie exterior square of $K$ is an improper  multiplicative Lie algebra $K.$ 
\end{abstract}
\maketitle
\section{Introduction}The Schur multiplier of a group was introduced by Schur in 1904 during his works on projective representations of groups \cite{Schur}. Given a group $K,$ the Schur multiplier of $K$ is the second cohomology group $ H^2(K,\C^*),$ where $K$ acts trivially on $\C^*.$    Several attempts have been done to characterize the structure of the Schur multiplier for some classes of groups, for example, the dihedral group, metacyclic group, alternating group, quaternion group, and many other groups \cite{K87}.  It has been a powerful tool in the classification of finite simple groups, and finite $p$-groups \cite{GJ99,  K87,NM12}. It is worth to describe the structure of the Schur multiplier for a given group.

The finite dimensional Lie algebra analogue to the Schur multiplier was developed in \cite{PB93,PB96}. Again, the classification of finite dimensional nilpotent Lie algebras according to the dimension of its Schur multiplier has been a line of investigation. 

In 1993, G. J. Ellis \cite{GJ}  introduced an interesting algebraic structure called “multiplicative Lie algebra” which is a generalization of groups as well as Lie algebras. 
In \cite{FP}, F. Point
and P. Wantiez introduced the concept of nilpotency for multiplicative Lie algebras and
proved many nilpotency results which are known for groups and Lie algebras. 
In 2021, M. S. Pandey et al. \cite{ML} discussed another notion of solvable and nilpotent multiplicative Lie algebra with the help of a multiplicative Lie center and Lie commutator.
To
see more structural properties of multiplicative Lie algebra, in \cite{AGNM}  Bak et al.
constructed and studied two homology theories for multiplicative Lie algebras which
are already known for groups and Lie algebras. The authors introduced and studied
Schur multiplier in terms of homology. In 2018, Lal and Upadhyay studied the theory of extensions of multiplicative Lie algebras and introduced the Schur multiplier as the second cohomology of multiplicative Lie algebras \cite{RLS}. The authors also introduced the concept of the Lie exterior square of multiplicative Lie algebras and consequently, they described the Schur multiplier of a multiplicative Lie algebra as the group of non-trivial relations satisfied by the Lie product. Schur multiplier was also expressed in terms of presentations. 

It is an interesting problem to characterize the structure of the Schur multiplier for multiplicative Lie algebras. Note that, given a group $K$, there is always a trivial multiplicative Lie algebra structure on $K$. Every non abelian group holds at least two multiplicative Lie algebra structures: one is trivial and the other one is given by a commutator. In \cite{MS}, Pandey et al. determined all possible multiplicative Lie algebra structures for certain finite groups.
In this paper, we find the Schur multipliers and the Lie exterior squares of certain classes of finite multiplicative Lie algebras given in \cite{MS}. At the end of the article, we give a table having the Schur multiplier of some precise multiplicative Lie algebras.

  \section{Preliminaries}
In this section we recall some basics of the Schur multiplier of a group from \cite{K87},  multiplicative Lie algebras and second Lie cohomology from \cite{RLS}. 

\begin{definition}\label{Cohomology}
	Let $K$ and $H$ be a multiplicative group and an abelian multiplicative group, respectively. We say that $K$ acts on $H$ if for each $x \in  K$ and $a  \in  H$, there is a unique element of $H$ denoted by $ a^x $ or $ax$, such that 
	\begin{enumerate}
		\item $ (ab)^x=a^xb^x $  $( a, b  \in  H,$ $x  \in  K )$  
		\item $ (a^x)^y=a^{xy} $ $( a \in  H,$ $ x, y  \in  K ) $ 
		\item $ a^1=a $ $( a \in H )$ .
	\end{enumerate}
A function $f:K^n\to H$  is called an $n$-cochain of $K$ in $H$. The set of all $n$-cochains, written as $ C^n(K,H), $ becomes an abelian group under the multiplication. For $ n=0, $ we put  $ C^0(K,H)=H. $ The formula 
\begin{align*} 
	d_{n+1}(f(x_1,\dots,x_{n+1}))&=f(x_2,\dots,x_{n+1})\times  \prod_{i=1}^n f(x_1,\dots,x_{i-1},x_ix_{i+1},x_{i+2},\dots,x_{n+1})^{(-1)^i}\\
& ~~~~~~~~~~~~~~~~\times f(x_1,\dots,x_n)^{(-1)^{n+1}}x_{n+1} 
\end{align*}
determines a group homomorphism $$ d_{n+1}:C^n(K,H) \longrightarrow  C^{n+1}(K,H). $$
We have a sequence of homomorphisms of abelian groups:
$$ \cdots C^{n-1}(K,H) \xrightarrow[\text{}]{\text{d}_n} C^n(K,H) \xrightarrow[\text{}]{\text{d}_{n+1}}  C^{n+1}(K,H) \cdots $$
such that $ d_{n+1}\circ d_n$ is the zero map from  $C^{n-1}(K,H)$ to $C^{n+1}(K,H).$
So, $ \im (d_n) \subseteq \ker (d_{n+1}). $
Denote $ Z^n(K,H)=\ker (d_{n+1}) $ and $ B^n(K,H)=\im (d_n). $  The elements of $ Z^n(K,H) $ and $ B^n(K,H)$ are called $n$-cocycles and $n$-coboundaries,respectively.
The nth cohomology group $ H^n(K,H) $ of $K$ with coefficients in $H$ is defined by: $$ H^n(K,H) = \frac{Z^n(K,H)}{B^n(K,H)} , (n\geq1).  $$	
\end{definition}

\begin{definition}
The second cohomology group  $ H^2(K,\C^*) $ is called the  Schur multiplier of the group $K$  and is denoted by $ M(K)$, where $K$ denotes a finite group and  the action of $K$ on $\C^*$ is trivial.	
\end{definition}

Now we state the following results:

\begin{proposition}
	$ M(K) = \{1\}$, if $K$ is finite cyclic group.
\end{proposition}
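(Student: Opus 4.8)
The plan is to reduce the computation of $M(K) = H^2(K,\C^*)$ to a standard feature of the cohomology of finite cyclic groups and then exploit the fact that $\C^*$ is a divisible group. Write $K = \langle x \rangle$ with $|K| = n$, and recall that over the group ring $\Z[K]$ the trivial module $\Z$ admits the familiar $2$-periodic free resolution
\begin{align*}
\cdots \xrightarrow{N} \Z[K] \xrightarrow{x-1} \Z[K] \xrightarrow{N} \Z[K] \xrightarrow{x-1} \Z[K] \xrightarrow{\varepsilon} \Z \to 0,
\end{align*}
where $\varepsilon$ is the augmentation and $N = 1 + x + \cdots + x^{n-1}$ is the norm element.

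First I would apply $\mathrm{Hom}_{\Z[K]}(-,\C^*)$ to this resolution. Since $K$ acts trivially on $\C^*$, the induced coboundary maps are easy to identify: the map dual to $x-1$ becomes the trivial (zero) homomorphism, while the map dual to $N$ becomes the $n$-th power map $a \mapsto a^n$ on $\C^*$. The resulting cochain complex therefore reads $\C^* \xrightarrow{0} \C^* \xrightarrow{(\cdot)^n} \C^* \xrightarrow{0} \cdots$, and reading off cohomology in degree $2$ gives
\begin{align*}
H^2(K,\C^*) \cong \C^* / (\C^*)^n,
\end{align*}
where $(\C^*)^n = \{a^n : a \in \C^*\}$. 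This is the usual periodicity statement; I would either quote it or justify the identification of the two boundary maps by a short direct check against the explicit cochain formula in Definition \ref{Cohomology}.

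The decisive step is then the observation that $\C^*$ is divisible: every nonzero complex number has an $n$-th root, so $(\C^*)^n = \C^*$ and the quotient $\C^*/(\C^*)^n$ is trivial. Hence $M(K) = H^2(K,\C^*) = \{1\}$.

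I expect the only real obstacle to be bookkeeping rather than substance. Justifying that the periodic resolution yields exactly the complex $\C^* \xrightarrow{0} \C^* \xrightarrow{(\cdot)^n} \C^*$ requires matching the abstract boundary maps with the explicit coboundary formula, which is routine but must be done carefully to get the parity of the degrees right (the odd cohomology being the $n$-torsion of $\C^*$ and the even cohomology being the cokernel of the $n$-th power map). The genuinely essential ingredient is the divisibility of $\C^*$; without it --- for instance, with coefficients in $\Z$, where $H^2(K,\Z) \cong \Z/n\Z$ --- the group would be nontrivial, so this is precisely where the special role of $\C^*$ enters.
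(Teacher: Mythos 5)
Your proof is correct. The paper itself states this proposition without proof, merely recalling it as a standard fact from Karpilovsky's monograph, so there is no argument in the paper to compare against. Your derivation via the $2$-periodic free resolution of $\Z$ over $\Z[K]$ is the classical one: with trivial action the dual of $x-1$ is zero and the dual of the norm $N$ is the $n$-th power map, giving $H^2(K,\C^*)\cong \C^*/(\C^*)^n$, which vanishes by divisibility of $\C^*$; your parity bookkeeping (odd degrees give the $n$-torsion $\mu_n$, consistent with $H^1(K,\C^*)\cong\mathrm{Hom}(K,\C^*)\cong\Z/n\Z$) is a good sanity check. Worth noting, in the spirit of the rest of the paper, is an even shorter route through the machinery already set up here: the exact sequence (\ref{1}) identifies $M(K)$ with the kernel of $\chi\colon K\wedge K\to [K,K]$, and for $K$ cyclic the defining relations $a\wedge a=1$ together with biadditivity force $x^i\wedge x^j=(x\wedge x)^{ij}=1$, so $K\wedge K$ is already trivial and hence so is $M(K)$. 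Both arguments are valid; yours generalizes immediately to compute all of $H^*(K,A)$ for any trivial module $A$, while the exterior-square argument stays entirely inside the paper's own formalism.
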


\begin{proposition}\label{direct}
	Let $K_1$ and $K_2$ be finite abelian groups. Then $$ M(K_1\times K_2)\cong M(K_1)\times M(K_2)\times	Hom (K_1,K_2). $$
\end{proposition}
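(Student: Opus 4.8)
The plan is to reduce the statement to two standard facts: a general decomposition of the Schur multiplier of a direct product, and the identification of the tensor product with the Hom group for finite abelian groups. First I would pass from the cohomological definition $M(K)=H^2(K,\C^*)$ to the homological one $M(K)\cong H_2(K,\Z)$. Since $\C^*$ is a divisible, hence injective, abelian group, the universal coefficient theorem gives $H^2(K,\C^*)\cong \mathrm{Hom}(H_2(K,\Z),\C^*)$ with the $\mathrm{Ext}$-term vanishing; for finite $K$ the group $H_2(K,\Z)$ is finite, and applying $\mathrm{Hom}(-,\C^*)$ (equivalently $\mathrm{Hom}(-,\Q/\Z)$, i.e. Pontryagin duality) returns a group isomorphic to $H_2(K,\Z)$ itself. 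This lets me work entirely with $H_2$, where the Künneth machinery is available.

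Next I would apply the Künneth theorem for group homology to $K_1\times K_2$. Over the principal ideal domain $\Z$ it yields, in degree two, the tensor summands $H_2(K_1)\otimes H_0(K_2)$, $H_1(K_1)\otimes H_1(K_2)$, $H_0(K_1)\otimes H_2(K_2)$ together with the torsion summands $\mathrm{Tor}(H_1(K_1),H_0(K_2))$ and $\mathrm{Tor}(H_0(K_1),H_1(K_2))$. Because $H_0(K_i)=\Z$ is free, both $\mathrm{Tor}$-terms vanish and the two outer tensor factors collapse to $H_2(K_1)$ and $H_2(K_2)$. Using $H_1(K_i)\cong K_i$ (the groups being abelian) and $H_2(K_i)\cong M(K_i)$, the decomposition becomes $M(K_1\times K_2)\cong M(K_1)\times M(K_2)\times (K_1\otimes K_2)$; the non-canonical splitting of the Künneth sequence suffices here, since only an abstract isomorphism is claimed.

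It then remains to prove $K_1\otimes_{\Z} K_2\cong \mathrm{Hom}(K_1,K_2)$ for finite abelian groups. By the structure theorem both sides decompose over cyclic factors, and both $\otimes$ and $\mathrm{Hom}$ distribute over finite direct sums, so I would reduce to the cyclic case. There one computes directly $\Z/m\otimes \Z/n\cong \Z/\gcd(m,n)\cong \mathrm{Hom}(\Z/m,\Z/n)$, and reassembling the factors yields the isomorphism in general. Substituting this into the previous display completes the proof.

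The main obstacle I anticipate is the first stage: the cohomology-to-homology dictionary and the Künneth decomposition are where all the real content lies, and one must be careful that the universal-coefficient and Künneth sequences split (non-canonically), so that the abstract product decomposition is legitimate, and that the $\mathrm{Tor}$-contributions genuinely vanish. By contrast, the tensor-Hom identification is an elementary finite computation. An alternative that avoids homological algebra altogether is to use $M(K)\cong \bigwedge^2 K$ for finite abelian $K$ and expand $\bigwedge^2(K_1\oplus K_2)\cong \bigwedge^2 K_1\oplus(K_1\otimes K_2)\oplus\bigwedge^2 K_2$, which reproduces exactly the same three summands.
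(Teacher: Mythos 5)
The paper does not prove this proposition at all: it is listed among the recalled preliminaries (``Now we state the following results'') and is quoted from Karpilovsky's monograph, so there is no in-paper argument to compare against. Your proposal is a correct and essentially standard derivation. The chain $H^2(K,\C^*)\cong \mathrm{Hom}(H_2(K,\Z),\C^*)\cong H_2(K,\Z)$ is valid because $\C^*$ is divisible (so the $\mathrm{Ext}$ term in the universal coefficient sequence vanishes) and $H_2(K,\Z)$ is finite (so Pontryagin duality gives an abstract isomorphism); the K\"unneth computation correctly kills the $\mathrm{Tor}$ terms since $H_0=\Z$ is free and identifies the cross term as $H_1(K_1)\otimes H_1(K_2)=K_1\otimes K_2$; and the reduction of $K_1\otimes K_2\cong \mathrm{Hom}(K_1,K_2)$ to cyclic factors is the right elementary step (note this last identification genuinely needs \emph{both} groups finite, which the hypothesis supplies). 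The alternative you sketch at the end, via $M(K)\cong\bigwedge^2K$ for finite abelian $K$ and the decomposition $\bigwedge^2(K_1\oplus K_2)\cong\bigwedge^2K_1\oplus(K_1\otimes K_2)\oplus\bigwedge^2K_2$, is in fact the form in which this statement usually appears in the cited reference, and is the more economical route since it avoids invoking K\"unneth; either version is acceptable.
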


\begin{definition}
	Let $(K,\cdot)$ be a group. Then the non-abelian exterior square $ K\wedge K $ of $K$ is a group generated by the elements of the set $ \{a \wedge b:a,b\in K\} $ satisfying the following conditions:
	\begin{enumerate}
		\item $ a \wedge a=1 $
		\item $ ab \wedge c=(^ab \wedge ^ac)  (a \wedge c) $
		\item $ a \wedge bc=(a \wedge b)  (^ba \wedge ^bc) $
	\end{enumerate}
for all $a, b, c \in K. $
\end{definition}

Now, we have an epimorphism $ \chi : K\wedge K \longrightarrow [K,K] $ defined by
$ \chi(g\wedge h) = [g,h] $ and the kernel of $ \chi $ is the Schur multiplier $M(K)$ of $K$, that is, we have the short exact sequence 
\begin{equation}\label{1}
 {1}\longrightarrow  M(K)\longrightarrow K\wedge K \overset{\chi} \longrightarrow [K,K] \longrightarrow {1}. 
\end{equation}

\begin{definition}
A multiplicative Lie algebra is a triple $ (K,\cdot,\star), $ where $K$ is a set, $ \cdot $ and $ \star $ are two binary operations on $K$ such that $ (K,\cdot) $ is a group (need not be abelian) and for all  $x, y, z  \in  K$, the following identities hold: 
\begin{enumerate}
	\item $ x\star x=1 $  
	\item $ x\star(y \cdot z)=(x\star y)\cdot^y(x\star z) $ 
	\item $ (x \cdot y)\star z= ^x(y\star z) \cdot (x\star z) $ 
	\item $ ((x\star y)\star ^yz)((y\star z)\star^zx)((z\star x)\star^xy)=1 $ 
	\item $ ^z(x\star y)=(^zx\star ^zy)$ 
\end{enumerate}
where $^xy$ denotes $xyx^{-1}.$ We say that $ \star $ is a multiplicative Lie algebra structure on the group $K$.
\end{definition}

\begin{proposition}\cite{GJ} \label{MLA}
Let $(K,\cdot,\star)$ be a  multiplicative Lie algebra. Then the following further identities hold:
\begin{enumerate}
	\item $ (1\star x) = 1 = (x\star 1) $ for all $x\in K.$
	
	\item $ (x\star y)(y\star x) = 1 $  for all $x,y \in K.$
	
	\item $ ^{(x\star y)}(u\star v) = ^{[x,y]}(u\star v) $ for all $x,y,u,v \in K.$
	
	\item $ [(x\star y),z] = ([x,y]\star z) $ for all  $x,y,z \in K.$
	
	\item $ x^{-1}\star y = ^{x^{-1}}((x\star y)^{-1})$ and $ x\star y^{-1} = ^{y^{-1}}((x\star y)^{-1}) $ for all $x,y \in K.$
	
\end{enumerate}
\end{proposition}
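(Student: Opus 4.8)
The plan is to derive the five identities one after another, obtaining each from the five defining identities of a multiplicative Lie algebra together with the parts of the list already established. Parts (1), (2) and (5) are immediate consequences of the two ``biderivation'' identities $x\star(yz)=(x\star y)\,{}^{y}(x\star z)$ and $(xy)\star z={}^{x}(y\star z)\,(x\star z)$ together with the alternating law $x\star x=1$. Parts (3) and (4), which link the Lie product $\star$ to the group commutator, are the substantive ones and will require in addition the Jacobi identity and the conjugation law ${}^{z}(x\star y)=({}^{z}x\star{}^{z}y)$.

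For part (1) I would set $y=z=1$ in $x\star(yz)=(x\star y)\,{}^{y}(x\star z)$; since ${}^{1}a=a$ this reads $x\star 1=(x\star 1)(x\star 1)$, forcing $x\star 1=1$ by cancellation in the group, and symmetrically $x=y=1$ in $(xy)\star z={}^{x}(y\star z)(x\star z)$ gives $1\star z=(1\star z)(1\star z)$, hence $1\star z=1$. For part (2) the idea is to expand $(xy)\star(xy)$, which is $1$ by the alternating law, in two stages: applying $(xy)\star w={}^{x}(y\star w)(x\star w)$ with $w=xy$ and then $u\star(xy)=(u\star x)\,{}^{x}(u\star y)$ to each factor, the diagonal terms $x\star x$ and $y\star y$ disappear and one is left with $1={}^{x}\big((y\star x)(x\star y)\big)$, so that $(y\star x)(x\star y)=1$. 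Part (5) then follows from part (1): from $1=1\star y=(xx^{-1})\star y={}^{x}(x^{-1}\star y)(x\star y)$ one gets ${}^{x}(x^{-1}\star y)=(x\star y)^{-1}$, and conjugating by $x^{-1}$ yields the first formula, while the second is obtained dually from $1=x\star 1=x\star(yy^{-1})=(x\star y)\,{}^{y}(x\star y^{-1})$.

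The heart of the argument is parts (3) and (4). For part (4) I would first rewrite the group commutator using the conjugation law and part (2): since ${}^{z}(x\star y)=({}^{z}x\star{}^{z}y)$,
\[
[(x\star y),z]=(x\star y)\,{}^{z}\big((x\star y)^{-1}\big)=(x\star y)\,({}^{z}x\star{}^{z}y)^{-1}=(x\star y)\,({}^{z}y\star{}^{z}x).
\]
It then remains to identify this with $[x,y]\star z$. Expanding $[x,y]\star z=\big((xy)(yx)^{-1}\big)\star z$ by the biderivation law in the first variable and the inverse formula from part (5), and then simplifying the nested conjugation factors by means of the Jacobi identity $((x\star y)\star{}^{y}z)((y\star z)\star{}^{z}x)((z\star x)\star{}^{x}y)=1$, should collapse the expression to exactly $(x\star y)({}^{z}y\star{}^{z}x)$. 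Finally, part (3) follows from part (4): writing ${}^{(x\star y)}(u\star v)=[(x\star y),(u\star v)]\,(u\star v)$ and ${}^{[x,y]}(u\star v)=[[x,y],(u\star v)]\,(u\star v)$, part (4) turns the first commutator into $[x,y]\star(u\star v)$ and, together with part (2), the second into $[x,y]\star[u,v]$; so (3) reduces to the equality $[x,y]\star(u\star v)=[x,y]\star[u,v]$, which expresses that $(u\star v)$ and $[u,v]$ act identically through the Lie product.

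I expect the main obstacle to be the computation in part (4): the repeated application of the biderivation law produces several nested conjugation factors, and the only instrument available to cancel them against one another is the Jacobi identity, so the real work lies in organizing that cancellation so that the three-fold product reduces to the single commutator $[(x\star y),z]$. Once part (4) is secured, parts (1), (2), (3) and (5) are essentially bookkeeping built on the two biderivation laws and the conjugation law.
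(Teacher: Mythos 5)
The paper gives no proof of this proposition (it is quoted from Ellis \cite{GJ}), so your proposal must stand on its own; parts (1), (2) and (5) do — the expansions of $x\star(1\cdot 1)$, $(xy)\star(xy)$ and $(xx^{-1})\star y$ work exactly as you describe. The gap is that the two substantive identities, (3) and (4), are never actually established. For (4) you correctly reduce the left side to $(x\star y)\,({}^{z}y\star{}^{z}x)$, but the right side is only ``expected to collapse''. If you carry out the expansion of $[x,y]\star z=(xyx^{-1}y^{-1})\star z$ by the biderivation law and part (5), you obtain
\[
{}^{[x,y]}\bigl((y\star z)^{-1}\bigr)\cdot{}^{xyx^{-1}}\bigl((x\star z)^{-1}\bigr)\cdot{}^{x}(y\star z)\cdot(x\star z),
\]
and the obstruction is the conjugation of elements of $K\star K$ by the group elements $x$, $xyx^{-1}$ and $[x,y]$. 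The Jacobi axiom is a relation among iterated $\star$-products only and is not the tool that removes these conjugations; what is needed is precisely identity (3), which lets you trade ${}^{[x,y]}(u\star v)$ for ${}^{(x\star y)}(u\star v)$. Since you propose to deduce (3) \emph{from} (4), your dependency order is circular exactly where the work lies. Moreover, your final reduction of (3) to ``$[x,y]\star(u\star v)=[x,y]\star[u,v]$'' is not bookkeeping: that equality is itself a nontrivial derived identity for which you give no argument.

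The standard route is the reverse of yours. Identity (3) has a self-contained proof needing neither (4) nor the Jacobi axiom: expand $(ab)\star(cd)$ in the two possible orders using axioms (2) and (3). One order gives ${}^{a}(b\star c)\cdot{}^{ac}(b\star d)\cdot(a\star c)\cdot{}^{c}(a\star d)$, the other ${}^{a}(b\star c)\cdot(a\star c)\cdot{}^{ca}(b\star d)\cdot{}^{c}(a\star d)$; cancelling the common outer factors yields ${}^{ac}(b\star d)\cdot(a\star c)=(a\star c)\cdot{}^{ca}(b\star d)$, i.e. ${}^{[a,c]}w={}^{(a\star c)}w$ for $w={}^{ca}(b\star d)$, and axiom (5) shows such $w$ exhaust all elements $u\star v$. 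With (3) available one can then attack (4), where the genuine computation (and, if anywhere, the Jacobi axiom) enters. As written, your proposal proves (1), (2) and (5) but leaves (3) and (4) open.
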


By the universal  property of $ K\wedge K, $ there is a unique group homomorphism  
$$ \phi :K\wedge K \longrightarrow K $$ given by 
$ \phi (a \wedge b)=a \ast b $ 
whose image is $ K\ast K $ and  we have the short exact sequence

\begin{equation}\label{2}
{1}\longrightarrow  \ker(\phi)\longrightarrow K\wedge K \overset{\phi} \longrightarrow K\star K \longrightarrow {1}.
\end{equation}

\begin{definition}
	A multiplicative Lie algebra structure $\star$ on a group $K$ is said to be a proper  multiplicative Lie algebra structure if $a\star b \neq 1$ for some pair of elements $a,b \in K$
	and also $a\star b \neq [a,b] $ for some pair of elements $a,b \in K.$ We term a group $K$ as a Lie simple group if there is no  proper  multiplicative Lie algebra structure on $K.$

We say the multiplicative Lie algebra structure $\star$ is trivial if $a\star b = 1$ for every $a,b \in K$ and is trivial if $a\star b = [a, b]$ for every $a,b \in K$
\end{definition}

\begin{proposition}[Corollary 2.7 \cite{RLS}]\label{equivarient}
	Let $K$ be a group with trivial Schur multiplier. Then any multiplicative Lie algebra structure on $K$ is uniquely determined by a $K$-equivariant homomorphism from $[K,K]$ to $K.$ 
\end{proposition}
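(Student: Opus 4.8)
The plan is to run the two short exact sequences \eqref{1} and \eqref{2} through their common object, the exterior square $K\wedge K$. The hypothesis $M(K)=\{1\}$ says exactly that $\ker\chi=\{1\}$ in \eqref{1}, so $\chi\colon K\wedge K\to[K,K]$ is an isomorphism. Since the multiplicative Lie algebra structure already supplies, via the universal property, the homomorphism $\phi\colon K\wedge K\to K$ with $\phi(a\wedge b)=a\star b$ from \eqref{2}, I would simply set
\[
\psi:=\phi\circ\chi^{-1}\colon [K,K]\longrightarrow K,
\]
a group homomorphism as a composite of homomorphisms. Then $\psi([a,b])=\phi(a\wedge b)=a\star b$ for all $a,b\in K$, which both exhibits the claimed homomorphism and shows that $\star$ is recovered from $\psi$ through $a\star b=\psi([a,b])$. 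Consequently any two structures inducing the same $\psi$ coincide, which is the asserted uniqueness.

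First I would confirm the $K$-equivariance of $\psi$. As the action on $[K,K]$ is by conjugation, ${}^{z}[a,b]=[{}^{z}a,{}^{z}b]$, and axiom (5) of a multiplicative Lie algebra gives
\[
\psi({}^{z}[a,b])=\psi([{}^{z}a,{}^{z}b])={}^{z}a\star{}^{z}b={}^{z}(a\star b)={}^{z}\psi([a,b]),
\]
so $\psi$ is indeed $K$-equivariant. This establishes the direction in which every structure determines such a homomorphism; the uniqueness noted above completes the ``uniquely determined'' assertion, since $\psi$ is forced by the values $a\star b$ and $\star$ is forced by $\psi$.

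For a genuine correspondence one also wants the converse: that every $K$-equivariant homomorphism $\psi\colon[K,K]\to K$ returns a valid structure via $a\star b:=\psi([a,b])$. Here axioms (1) and (5) are immediate from $\psi(1)=1$ and equivariance, and axioms (2) and (3) fall out of the commutator identities $[x,yz]=[x,y]\,{}^{y}[x,z]$ and $[xy,z]={}^{x}[y,z]\,[x,z]$ combined with the homomorphism and equivariance properties of $\psi$; one also gets $\psi([c,w])=[\psi(c),w]$ for $c\in[K,K]$ from $[c,w]=c\,({}^{w}c)^{-1}$. The hard part will be the Jacobi-type axiom (4): expanding
\[
\bigl((x\star y)\star{}^{y}z\bigr)\bigl((y\star z)\star{}^{z}x\bigr)\bigl((z\star x)\star{}^{x}y\bigr)
\]
as a product of $\psi$-values of nested commutators and collapsing it to $1$. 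I expect this to reduce, after using the homomorphism and equivariance properties, to the Hall--Witt identity in $K$, but the main obstacle is making that reduction go through cleanly — in particular matching the conjugations induced by the arguments ${}^{y}z,\ {}^{z}x,\ {}^{x}y$ with those appearing in Hall--Witt, since the inner entries $\psi([x,y])$ need not themselves lie in $[K,K]$.
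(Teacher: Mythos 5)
The paper never proves this proposition; it is imported without argument as Corollary~2.7 of \cite{RLS}, so there is no in-paper proof to measure you against. Your argument for the direction that the statement actually asserts is correct and is the natural one: triviality of $M(K)=\ker\chi$ in sequence \eqref{1} makes $\chi$ an isomorphism onto $[K,K]$, the composite $\psi=\phi\circ\chi^{-1}$ is a homomorphism with $\psi([a,b])=a\star b$, equivariance on the generating commutators follows from axiom (5) and propagates to all of $[K,K]$ because $\psi$ is a homomorphism and conjugation is an automorphism, and $\star$ is visibly recovered from $\psi$. That is exactly the form in which the present paper uses the result (proof of Corollary~\ref{Simple com}), so nothing more is needed. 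Concerning the converse, which the statement does not literally claim: your worry about axiom (4) dissolves. For $u\in[K,K]$ and $w\in K$, equivariance gives $[\psi(u),w]=\psi(u)\,\bigl({}^{w}\psi(u)\bigr)^{-1}=\psi\bigl(u\,({}^{w}u)^{-1}\bigr)=\psi([u,w])$, so $(x\star y)\star{}^{y}z=\psi\bigl([\psi([x,y]),{}^{y}z]\bigr)=\psi\bigl(\psi([[x,y],{}^{y}z])\bigr)$, with every intermediate element a commutator and hence in $[K,K]$; multiplying the three factors and using that $\psi$ is a homomorphism collapses the left side of axiom (4) to $\psi\bigl(\psi\bigl([[x,y],{}^{y}z]\,[[y,z],{}^{z}x]\,[[z,x],{}^{x}y]\bigr)\bigr)=\psi(\psi(1))=1$ by the Hall--Witt identity. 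No control over where $\psi([x,y])$ lands is required.
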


\begin{definition}\label{cocycle}
A multiplicative Lie 2-cocycle of a multiplicative Lie algebra $K$ with coefficients in an abelian group $H$ with trivial Lie product is a pair $(f,h)$, where $ f \in Z^2(K,H) $ is a group 2-cocycle of $K$ with coefficient in the trivial $K$-module $H$, and $h$ is a map from $ K \times K $ to $H$ satisfying the equations:
 
\begin{enumerate}
	\item $ h(x,1)=h(1,x)=h(x,x)=1 $ for all $ x \in K. $
	
	\item $ h(x,yz)=h(x,y)h(x,z)f(y^{-1},y)^{-1}f(y,x\star z) f(y(x\star z),y^{-1})f(x\star y,^y(x\star z)) $ for all $x, y, z \in K.$
	
	\item $ h(xy,z)=h(y,z)h(x,z)f(x^{-1},x)^{-1}f(x,y\star z) f(x(y\star z),x^{-1})f(^x(y\star z),(x\star z)) $ for all $x, y, z \in K.$
	
	\item $ h(y\star x,^xz)h(x\star z,^zy)h(z\star y,^yx) f((y\star x)\star ^xz,(x\star z)\star ^zy)f((y\star x)\star ^xz)\cdot(x\star z)\star ^zy, \linebreak (z\star y)\star ^yx)=1. $ for all $x, y, z \in K.$
	
	\item $ h(^zx,^zy)=h(x,y)f(z,x\star y)f(z^{-1},z)^{-1}f(z(x\star y),z^{-1}). $ for all $x, y, z \in K.$
\end{enumerate}

The set $ Z^2_{ML}(K,H) $ of all multiplicative Lie 2-cocycles of $K$ with coefficient in $H$ is an abelian group with respect to the coordinate wise operation given by $$ (f,h)\cdot (f',h')=(ff',hh'). $$
Given any identity preserving map $g$ from $K$ to $H$, the pair $(\delta g,g^*) $ of maps from $ K\times K $ to H given by $$\delta g(x,y)=g(y)g(xy)^{-1}g(x)$$ and $$g^*(x,y)=g(x\star y)^{-1}$$ is a member of $ Z^2_{ML}(K,H). $
Let $MAP(K,H)$ denote the group of identity preserving maps from $K$ to $H$. We have a group homomorphism $$\chi: MAP(K,H) \longrightarrow Z^2_{ML}(K,H) $$ given by  $$\chi(g)=(\delta g,g^*). $$
The image of $\chi$ is called the group of multiplicative Lie 2-coboundaries of $K$ with coefficient in $H$ and is denoted by  $ B^2_{ML}(K,H). $ 
The quotient group  $ \frac{Z^2_{ML}(K,H)}{ B^2_{ML}(K,H)}$ is called the second Lie cohomology of $K$ with coefficient in $H,$ and is denoted by  $ H^2_{ML}(K,H). $

Now, we have a natural projection homomorphism $\tilde{p} :  H^2_{ML}(K,H) \longrightarrow  H^2(K,H)$ defined by $$\tilde{p}((f,h)+B^2_{ML}(K,H)) = f +B^2(K,H)$$

Now onwards, we denote $(f,h)+B^2_{ML}(K,H)$ and $f+B^2(K,H)$ by $\bar{(f,h)}$ and $\bar{f}$, respectively.
\end{definition} 

Now we have the following results:

\begin{proposition}[Proposition 3.3, \cite{RLS}]\label{trivial}
	Let $H$ and $K$ be abelian groups with trivial multiplicative Lie algebra structures. Then $$ H^2_{ML}(K,H)\cong H^2(K,H)\times Hom(\wedge^2K,H).$$
\end{proposition}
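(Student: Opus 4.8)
Since $K$ and $H$ carry the trivial multiplicative Lie algebra structures and $K$ is abelian, we have $x\star y=1$ and ${}^{z}x=zxz^{-1}=x$ for all $x,y,z\in K$, and the action of $K$ on $H$ is trivial. The plan is to analyse the defining equations of a multiplicative Lie $2$-cocycle $(f,h)$ of Definition \ref{cocycle} under these hypotheses, to show that after normalising $f$ the map $h$ is forced to be an alternating biadditive map, and finally to obtain the product decomposition by splitting the projection $\tilde p$.

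First I would record what the coboundaries look like. Because $\star$ is trivial and every $g\in MAP(K,H)$ is identity preserving, the map $g^*(x,y)=g(x\star y)^{-1}=g(1)^{-1}=1$ vanishes, so $B^2_{ML}(K,H)=\{(\delta g,1):g\in MAP(K,H)\}$, which the first projection carries isomorphically onto $B^2(K,H)$. In particular, multiplying a cocycle $(f,h)$ by an element of $B^2_{ML}(K,H)$ alters $f$ by the coboundary $\delta g$ but leaves $h$ unchanged. Since every class of $H^2(K,H)$ has a normalised representative, every class of $H^2_{ML}(K,H)$ therefore has a representative $(f,h)$ with $f$ normalised, i.e. $f(x,1)=f(1,x)=1$.

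Next I would simplify the five conditions for such a normalised $f$. Using $x\star y=1$, ${}^{z}x=x$ and the group cocycle identity (which for normalised $f$ yields $f(y^{-1},y)=f(y,y^{-1})$), conditions (4) and (5) collapse to trivial identities, while conditions (2) and (3) reduce to $h(x,yz)=h(x,y)h(x,z)$ and $h(xy,z)=h(x,z)h(y,z)$, and condition (1) gives $h(x,x)=1$. Thus $h$ is an alternating biadditive map $K\times K\to H$; by the universal property of the exterior square $K\wedge K$ of the abelian group $K$, whose defining relations degenerate to ordinary biadditivity, these maps are exactly the elements of $Hom(\wedge^2K,H)$. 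Conversely, for any normalised $f$ and any $h\in Hom(\wedge^2K,H)$ one checks directly that $(f,h)$ satisfies (1)--(5), hence is a genuine multiplicative Lie $2$-cocycle.

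Finally I would assemble the isomorphism. The map $s:H^2(K,H)\to H^2_{ML}(K,H)$, $\bar f\mapsto \overline{(f,1)}$ for a normalised representative $f$, is a well-defined homomorphism with $\tilde p\circ s=\mathrm{id}$, so $\tilde p$ splits and $H^2_{ML}(K,H)\cong \ker\tilde p\times H^2(K,H)$. To identify the kernel, a class with $\bar f=1$ has a representative $(\delta g,h)$, and multiplying by $(\delta g,1)^{-1}\in B^2_{ML}(K,H)$ produces the cohomologous representative $(1,h)$ with $h\in Hom(\wedge^2K,H)$; since $(1,h)\in B^2_{ML}(K,H)$ forces $h=1$, the assignment $\overline{(1,h)}\mapsto h$ gives $\ker\tilde p\cong Hom(\wedge^2K,H)$, yielding $H^2_{ML}(K,H)\cong H^2(K,H)\times Hom(\wedge^2K,H)$. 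The main obstacle I anticipate is exactly the $f$-dependence in equations (2), (3) and (5): for a non-normalised cocycle these carry correction terms such as $f(y^{-1},y)^{-1}f(y,y^{-1})$, and the whole argument hinges on the observation that passing modulo $B^2_{ML}(K,H)$ lets one normalise $f$ without disturbing $h$, which is what makes the alternating biadditive description of $h$ clean.
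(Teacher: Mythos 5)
Your overall strategy is sound and reaches the correct conclusion. Note first that the paper itself contains no proof to compare against: the statement is imported verbatim as Proposition~3.3 of \cite{RLS}, and your route is exactly the expected one, namely the specialisation of the exact sequence of Remark~\ref{sequence} (with $J=\{1\}$ because $\star$ is trivial) together with a splitting of $\tilde p$.

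There is, however, one step whose justification does not work as written: the claim that every class of $H^2_{ML}(K,H)$ has a representative with $f$ normalised \emph{because} every class of $H^2(K,H)$ does. For an identity-preserving $g$ one has $\delta g(x,1)=g(1)=1=\delta g(1,x)$, so every coboundary occurring in $B^2_{ML}(K,H)$ is itself already normalised, and multiplying $(f,h)$ by elements of $B^2_{ML}(K,H)$ cannot change the values $f(x,1)$ and $f(1,x)$ at all; the cochain that normalises an arbitrary group $2$-cocycle is the constant $g\equiv f(1,1)$, which is not identity preserving and hence is not available here. The step is rescued by observing that normalisation is automatic rather than achieved by a coboundary: for any $(f,h)\in Z^2_{ML}(K,H)$ the group cocycle identity gives $f(x,1)=f(1,y)=f(1,1)$ and $f(z,z^{-1})=f(z^{-1},z)$, and condition (5) of Definition~\ref{cocycle} with $x\star y=1$ and ${}^{z}x=x$ reads $f(z,1)f(z^{-1},z)^{-1}f(z,z^{-1})=1$, forcing $f(z,1)=1$ for all $z$. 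With that one-line repair the rest of your argument --- conditions (2)--(5) reducing $h$ to an alternating biadditive map, i.e.\ an element of $Hom(\wedge^2K,H)$ (for abelian $K$ the non-abelian exterior square is the ordinary one), the identification of $\ker\tilde p$ with these $h$, and the splitting $\bar f\mapsto\overline{(f,1)}$ of $\tilde p$ --- is correct and yields the stated direct product decomposition.
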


\begin{proposition}[Proposition 3.4, \cite{RLS}]\label{kernel}
	The kernel of the homomorphism  $$ \tilde{p}:H^2_{ML}(K,H)\longrightarrow H^2(K,H) $$ is isomorphic to $ Hom(\frac{\wedge^2K}{J},H), $ where $\wedge^2K$ is the non abelian exterior square of $K$, and $J$ is the subgroup of $\wedge^2K$ generated by $$ \{((x\star y)\wedge^yz)((y\star z)\wedge^zx)((z\star x)\wedge^xy)\}. $$
\end{proposition}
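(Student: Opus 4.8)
The plan is to produce the isomorphism in three stages: first cut $\ker\tilde{p}$ down to those cocycles whose group part is trivial, then match the survivors with homomorphisms out of $\wedge^2K/J$, and finally check that the matching descends to cohomology.

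\emph{Stage 1 (reduction).} Let $\overline{(f,h)}\in\ker\tilde{p}$, so $f\in B^2(K,H)$, say $f=\delta g$ with $g\in MAP(K,H)$. Since $\chi(g)=(\delta g,g^*)\in B^2_{ML}(K,H)$, the class $\overline{(f,h)}$ equals $\overline{(f,h)\chi(g)^{-1}}=\overline{(1,h\cdot(g^*)^{-1})}$, whose first coordinate is the trivial cochain. Writing $N$ for the subgroup of $Z^2_{ML}(K,H)$ consisting of the pairs $(1,h)$, this shows the composite $N\hookrightarrow Z^2_{ML}(K,H)\to\ker\tilde{p}$ is onto, so that $\ker\tilde{p}\cong N/(N\cap B^2_{ML}(K,H))$ by the isomorphism theorem. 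I would begin by verifying this reduction, in particular that $(1,h\cdot(g^*)^{-1})$ is again a cocycle (being a product of two cocycles).

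\emph{Stage 2 (identification of $N$).} With the group part trivial, every $f$-term in the five defining equations of Definition \ref{cocycle} disappears. Equation (1) becomes $h(x,x)=h(x,1)=h(1,x)=1$; equations (2) and (3) become $h(x,yz)=h(x,y)h(x,z)$ and $h(xy,z)=h(x,z)h(y,z)$, so $h$ is bimultiplicative; and equation (5) becomes the conjugation-invariance $h({}^z x,{}^z y)=h(x,y)$. Bimultiplicativity together with conjugation-invariance is exactly what reconciles $h(xy,z)=h(x,z)h(y,z)$ with the defining relation $ab\wedge c=({}^a b\wedge{}^a c)(a\wedge c)$ of the non-abelian exterior square (and similarly for the second slot), while $h(x,x)=1$ matches $a\wedge a=1$; hence by the universal property $h$ extends uniquely to a homomorphism $\psi_h\colon\wedge^2K\to H$ with $\psi_h(x\wedge y)=h(x,y)$, and conversely every such homomorphism restricts to an $h$ obeying (1),(2),(3),(5). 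Finally, equation (4) with trivial $f$ reads $h(y\star x,{}^x z)\,h(x\star z,{}^z y)\,h(z\star y,{}^y x)=1$, i.e. $\psi_h$ annihilates each generator of $J$, so $\psi_h$ factors through $\wedge^2K/J$. This sets up a natural bijection $N\cong\text{Hom}(\wedge^2K/J,H)$, which I would check is an isomorphism for the coordinatewise structure.

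\emph{Stage 3 (descent, the main obstacle).} Combining the two stages gives a surjection $\text{Hom}(\wedge^2K/J,H)\cong N\twoheadrightarrow\ker\tilde{p}$, and it remains to prove injectivity, i.e. that $N\cap B^2_{ML}(K,H)=\{1\}$. An element of this intersection is a coboundary $\chi(g)=(\delta g,g^*)$ lying in $N$, so $\delta g$ is trivial; this forces $g$ to be an honest group homomorphism $K\to H$, and one must then deduce that the accompanying map $g^*(x,y)=g(x\star y)^{-1}$ is also trivial, i.e. that $g$ kills every Lie product $x\star y$. I expect this to be the crux: it is precisely where the structural identities of Proposition \ref{MLA}, which tie $x\star y$ to the commutator $[x,y]$, must be brought in, so that a homomorphism into the abelian group $H$ (which necessarily annihilates commutators) also annihilates $K\star K$. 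Once $N\cap B^2_{ML}(K,H)=\{1\}$ is established, the desired isomorphism $\ker\tilde{p}\cong\text{Hom}(\wedge^2K/J,H)$ follows at once.
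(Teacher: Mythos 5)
First, a caveat: this paper does not prove Proposition \ref{kernel} at all --- it is quoted verbatim from [RLS, Proposition 3.4] as a preliminary --- so there is no in-paper proof to compare against, and I can only assess your argument on its own terms.

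Your Stages 1 and 2 are the standard and correct reductions: $\ker\tilde{p}\cong N/(N\cap B^2_{ML}(K,H))$ with $N=\{(1,h)\in Z^2_{ML}(K,H)\}$, and setting $f\equiv 1$ in Definition \ref{cocycle} does identify $N$ with $Hom(\wedge^2K/J,H)$ (modulo the converse direction of the universal-property argument, which you rightly flag as needing the conjugation relation inside $\wedge^2 K$). The genuine gap is exactly at the point you call the crux, Stage 3, and the mechanism you propose for closing it does not work. You want to show $N\cap B^2_{ML}(K,H)=\{1\}$ by arguing that a homomorphism $g\colon K\to H$ into an abelian group kills commutators and hence kills every $x\star y$. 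But Proposition \ref{MLA} only ties $x\star y$ to $[x,y]$ through identities such as $[(x\star y),z]=([x,y]\star z)$; it does \emph{not} give $K\star K\subseteq [K,K]$, and this inclusion is false in general. Concretely, for $V_4=\langle a,b\rangle$ with $a\star b=a$ (the very structure of Proposition \ref{Klein}) one has $[V_4,V_4]=1$ but $V_4\star V_4=\langle a\rangle$; the homomorphism $g\colon V_4\to\C^*$ with $g(a)=-1$, $g(b)=1$ satisfies $\delta g=1$ yet $g^*(a,b)=g(a\star b)^{-1}=-1\neq 1$, so $\chi(g)=(1,g^*)$ is a nontrivial element of $N\cap B^2_{ML}(V_4,\C^*)$.

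What your (correct) reduction actually yields is $\ker\tilde{p}\cong Hom(\wedge^2K/J,H)$ modulo the image of the map $Hom(K,H)\to Hom(\wedge^2K/J,H)$ induced by $x\wedge y\mapsto x\star y$; that image is trivial precisely when every homomorphism $K\to H$ annihilates $K\star K$, e.g.\ when $K\star K\subseteq[K,K]$ (so in particular for trivial and improper structures, consistent with Proposition \ref{trivial} and Corollary \ref{improper}). For a proper structure on an abelian group this extra quotient is genuinely nontrivial, so Stage 3 cannot be repaired by the argument you sketch; you would either need to impose $K\star K\subseteq[K,K]$ as a hypothesis or locate in [RLS] the feature of the definitions that excludes these coboundaries. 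As it stands, your proof is incomplete at its decisive step.
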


\begin{remark}\label{sequence}
We have an exact sequence
$$1 \longrightarrow  Hom\bigg(\frac{\wedge^2K}{J},H\bigg) \longrightarrow H^2_{ML}(K,H)\overset{\tilde{p}}\longrightarrow H^2(K,H).$$
\end{remark}
 
\begin{corollary}[Corollary 3.5, \cite{RLS}]\label{improper}
Let $K$ be a group with  trivial Schur multiplier and improper multiplicative Lie algebra structure. Then $$H^2_{ML}(K,H)\cong H^2(K,H)\times Hom(\wedge^2K,H).$$
\end{corollary}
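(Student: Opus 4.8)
The plan is to upgrade the left exact sequence of Remark \ref{sequence} to a \emph{split} short exact sequence; once that is done the conclusion is immediate. Thus I would prove that
$$1\longrightarrow Hom\Big(\tfrac{\wedge^2K}{J},H\Big)\longrightarrow H^2_{ML}(K,H)\overset{\tilde p}\longrightarrow H^2(K,H)\longrightarrow 1$$
is exact and split. This breaks into three tasks: (i) the subgroup $J\subseteq\wedge^2K$ is trivial, so that Proposition \ref{kernel} identifies the kernel with $Hom(\wedge^2K,H)$; (ii) $\tilde p$ is surjective; and (iii) $\tilde p$ admits a homomorphic section. Throughout I would unfold the hypothesis that $\star$ is improper into its two cases: either $x\star y=1$ for all $x,y\in K$, or $x\star y=[x,y]$ for all $x,y\in K$.

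For task (i), recall from Proposition \ref{kernel} that $J$ is generated by the elements $((x\star y)\wedge^yz)((y\star z)\wedge^zx)((z\star x)\wedge^xy)$. If $x\star y=1$ identically, each factor has the shape $1\wedge w$, and putting $a=b=1$ in the defining relation $ab\wedge c=(^ab\wedge ^ac)(a\wedge c)$ of the exterior square forces $1\wedge w=1$ for every $w$; hence every generator is trivial and $J=\{1\}$. If instead $x\star y=[x,y]$ identically, I would push the generators through the epimorphism $\chi\colon\wedge^2K\to[K,K]$ of \eqref{1}. Since $\chi(u\wedge v)=[u,v]=u\star v$, the image of a generator is exactly $((x\star y)\star ^yz)((y\star z)\star ^zx)((z\star x)\star ^xy)$, which equals $1$ by the fourth axiom of a multiplicative Lie algebra. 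Thus $\chi(J)=\{1\}$, and since $K$ has trivial Schur multiplier we have $\ker\chi=M(K)=\{1\}$ by \eqref{1}, so $\chi$ is injective and again $J=\{1\}$. In both cases Proposition \ref{kernel} yields $\ker\tilde p\cong Hom(\wedge^2K,H)$.

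For tasks (ii) and (iii) I would build the section by lifting extensions. Given a group $2$-cocycle $f\in Z^2(K,H)$, form the central extension $E=H\times_f K$ and equip the group $E$ with its improper multiplicative Lie algebra structure of the same type as the one on $K$ (the trivial bracket in the first case, the commutator bracket in the second). This is automatically a multiplicative Lie algebra structure on $E$, its restriction to the central subgroup $H$ is trivial, and it projects onto $\star$ on $K$; reading off its value on a fixed set-theoretic lift produces a map $h_f\colon K\times K\to H$ for which $(f,h_f)$ is a multiplicative Lie $2$-cocycle with $\tilde p\big(\overline{(f,h_f)}\big)=\bar f$. This proves surjectivity, and the assignment $\bar f\mapsto\overline{(f,h_f)}$ is the candidate section.

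The genuine work lies in showing that this assignment is a well defined homomorphism. For the homomorphism property one checks that $f\mapsto h_f$ is compatible with the Baer sum, i.e. $h_{ff'}=h_fh_{f'}$, reflecting the additivity of the commutator construction for central extensions. For well definedness one must verify that a coboundary $f=\delta g$ yields $(\delta g,h_{\delta g})\in B^2_{ML}(K,H)$, and here the commutator case is the delicate one: writing $\iota(x)=(0,x)$ for the set lift, a short computation in $E=H\times_{\delta g}K$ using the genuine splitting $x\mapsto(g(x)^{-1},x)$ gives $h_{\delta g}(x,y)=g([x,y])^{-1}=g(x\star y)^{-1}=g^*(x,y)$, so that $(\delta g,h_{\delta g})=(\delta g,g^*)$ is precisely the multiplicative Lie $2$-coboundary of Definition \ref{cocycle}. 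I expect this matching, together with the bookkeeping of the five cocycle identities encoded in the statement that $E$ is a multiplicative Lie algebra, to be the main obstacle. Once the section is in place, the sequence above is split exact and gives $H^2_{ML}(K,H)\cong H^2(K,H)\times Hom(\wedge^2K,H)$.
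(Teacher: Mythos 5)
The paper does not actually prove this statement: it is quoted verbatim from [RLS, Corollary~3.5] as a preliminary, so there is no in-paper argument to compare yours against line by line; I can only assess your reconstruction on its own terms, and it is essentially correct and is the natural route. Your task~(i) is right and is exactly where both hypotheses enter: for the trivial bracket the generators of $J$ collapse because $1\wedge w=1$, and for the commutator bracket the image of each generator under $\chi:\wedge^2K\to[K,K]$ is the product $[[x,y],{}^yz]\,[[y,z],{}^zx]\,[[z,x],{}^xy]=1$ (axiom (4) of a multiplicative Lie algebra applied to the commutator structure, i.e.\ the Hall--Witt identity), so $J\subseteq\ker\chi=M(K)=\{1\}$ by the exact sequence (\ref{1}). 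Two remarks. First, for the only use this paper makes of the corollary (Theorem \ref{simple}(2), where $H=\C^*$), the surjectivity and splitting of $\tilde p$ are not needed at all: $H^2(K,\C^*)=M(K)=1$, so Remark \ref{sequence} together with $J=\{1\}$ already gives $H^2_{ML}(K,\C^*)\cong Hom(\wedge^2K,\C^*)$; the harder two-thirds of your argument is only required for general $H$, where $H^2(K,H)\cong Ext(K^{ab},H)$ can be nonzero. Second, the one place your write-up leans on something unproved is the assertion that equipping $H\times_fK$ with the commutator bracket and reading off a set-theoretic lift yields a pair $(f,h_f)$ satisfying the five identities of Definition \ref{cocycle}; that is precisely the correspondence established in [RLS] between multiplicative Lie algebra extensions with central, bracket-trivial kernel and $H^2_{ML}(K,H)$, and it should be cited as such rather than left as ``bookkeeping.'' With that citation in place, together with the computation $h_{\delta g}(x,y)=g([x,y])^{-1}=g^*(x,y)$ that you sketch (which is correct and identifies the image of a coboundary with a multiplicative Lie $2$-coboundary), the splitting argument goes through and the stated isomorphism follows.
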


\begin{definition}
	Let $K$ be a finite multiplicative Lie algebra. The group $ H^2_{ML}(K,\C^*)$ is called the  Schur multiplier of $K$, and is denoted by $ \tilde M(K).$
\end{definition}

\begin{definition}\label{Lie exterior square}
	Let $ (K,\cdot,\star) $ be a multiplicative Lie algebra. Consider the multiplicative Lie algebra $ K \wedge^L K $ generated by the set $$ \{a\wedge b|a,b\in K \}\cup \{[a,b]_0|a,b\in K\} $$ of formal symbols subject to the folllowing relations :
	
	\begin{enumerate}
		\item $ 1 \wedge a = a \wedge 1 = a \wedge a = [1,a]_0 = [a,1]_0 = [a,a]_0 = 1 $
		
		\item $ (a \wedge b)(b \wedge a) = 1 = [a,b]_0[b,a]_0 $
		
		\item $ (a \wedge bc)^{-1}(a \wedge b)(^ba \wedge ^bc) = 1 = [a,bc]_0^{-1}[a,b]_0[^ba,^bc]_0 $
		
		\item $ (ab \wedge c)^{-1}(^ab \wedge ^ac)(a \wedge c) = 1 = [ab,c]_0^{-1}[^ab,^ac]_0[a,c]_0 $
		
		\item $ [a \wedge b,u \wedge v] = [[a,b]_0,(u \wedge v)] = [a,b]_0 \overset{\sim}{\star}(u \wedge v) = [a,b]_0\overset{\sim}{\star}[u,v]_0  $
		
		\item $ (a \wedge b)(^cb \wedge ^ca) = (^{ab}a^{-1} \wedge ^ac)(a \wedge c) $
		
		\item $ [a,b]_0[^cb,^ca]_0 = [^{ab}a^{-1},^ac]_0[a,c]_0 $
		
		\item $ [[a,b]_0,[u,v]_0] = [[a,b],[u,v]]_0 $
		
		\item $ (a \wedge b)\overset{\sim}{\star}(u \wedge v) = (a \star b)\wedge(u \star v) $
		
		where $ a,b,c,u,v \in K $.
	\end{enumerate}
\end{definition}

The multiplicative Lie algebra $K\wedge^L K$ will be termed as the Lie exterior square of $K.$ The Proposition \ref{MLA} ensures the existence of a unique homomorphism $\chi$ from $K\wedge^L K$ to $K$ given by $ \chi(a,b) = a\star b $ and $ \chi([a,b]_0) = [a,b].$ We have the following short exact sequence of multiplicative Lie algebras for a finite multiplicative Lie algebra $K:$ 
\begin{equation}\label{3}
1 \rightarrow \tilde M(K) \rightarrow K \wedge^L K \rightarrow (K \star K)[K,K] \rightarrow 1.
\end{equation}

\section{Schur Multiplier and Lie exterior square of a multiplicative Lie algebra}
The aim of this section is to find Schur multipliers and Lie exterior squares of some given multiplicative Lie algebras.

\begin{theorem} \label{simple}
	Let $K$ be a group.
\begin{enumerate}
\item Suppose $K$ is an abelian group with trivial multiplicative Lie algebra structure. Then the Lie exterior square $K\wedge^L K \cong  \tilde{M}(K)$ with trivial multiplicative Lie algebra structure. Moreover, $$ \tilde{M}(K)\cong M(K)\times M(K).$$

\item Suppose $K$ is a Lie simple group with  trivial Schur multiplier. Then $$ \tilde{M}(K)\cong Hom([K, K],\C^*).$$

\end{enumerate}
\end{theorem}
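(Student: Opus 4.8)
The plan is to treat the two parts separately, in each case reducing the computation of $\tilde M(K)=H^2_{ML}(K,\C^*)$ to an ordinary group Schur multiplier via the structural results of Section 2. For part (1), I would first note that triviality of the Lie product gives $K\star K=\{1\}$, while commutativity of $K$ gives $[K,K]=\{1\}$; hence the quotient $(K\star K)[K,K]$ in the short exact sequence \eqref{3} is trivial, and that sequence yields an isomorphism $K\wedge^L K\cong\tilde M(K)$. To see that this is an isomorphism of multiplicative Lie algebras carrying the \emph{trivial} Lie product, I would use that $\tilde M(K)=H^2_{ML}(K,\C^*)$ is abelian, so $K\wedge^L K$ is abelian, and then read off from relations (5) and (8) of Definition \ref{Lie exterior square} that every Lie product of generators vanishes: relation (8) gives $(a\wedge b)\overset{\sim}{\star}(u\wedge v)=(a\star b)\wedge(u\star v)=1\wedge1=1$, and relation (5) identifies the Lie products involving the $[a,b]_0$-type generators with group commutators in $K\wedge^L K$, which vanish by abelianness. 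Since the Lie product is bimultiplicative, it is then identically trivial.

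For the isomorphism $\tilde M(K)\cong M(K)\times M(K)$, I would apply Proposition \ref{trivial} with $H=\C^*$ to get $\tilde M(K)\cong H^2(K,\C^*)\times Hom(\wedge^2K,\C^*)=M(K)\times Hom(\wedge^2K,\C^*)$, and then identify the second factor with $M(K)$. Since $K$ is abelian, the right-hand term of \eqref{1} is trivial, so $M(K)\cong\wedge^2K$; and since $K$ is finite, $\wedge^2K$ is a finite abelian group, hence self-dual, giving $Hom(\wedge^2K,\C^*)\cong\wedge^2K\cong M(K)$. Combining the two factors yields the claim.

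For part (2), the key point is that Lie simplicity forces the given structure $\star$ on $K$ to be improper, so Corollary \ref{improper} applies directly: with $H^2(K,\C^*)=M(K)=\{1\}$ it gives $\tilde M(K)\cong Hom(\wedge^2K,\C^*)$. Triviality of $M(K)$ then makes \eqref{1} exhibit an isomorphism $\wedge^2K\cong[K,K]$, whence $\tilde M(K)\cong Hom([K,K],\C^*)$, as claimed. One may also argue directly from Remark \ref{sequence}: vanishing of $H^2(K,\C^*)$ makes $\tilde p$ the zero map, so $\tilde M(K)\cong Hom(\wedge^2K/J,\C^*)$, and for an improper $\star$ the relators generating $J$ in Proposition \ref{kernel} collapse --- for instance, when $\star$ is trivial each factor $(x\star y)\wedge{}^yz=1\wedge{}^yz=1$, so $J=\{1\}$ --- again giving $Hom(\wedge^2K,\C^*)$.

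I expect the main obstacle to be in part (1): verifying that the inherited Lie product on $K\wedge^L K$ is genuinely trivial, and not merely that $K\wedge^L K$ is abelian, requires inspecting all generator types of Definition \ref{Lie exterior square}, and the self-duality step $Hom(\wedge^2K,\C^*)\cong\wedge^2K$ relies essentially on finiteness. Part (2) is comparatively routine once one recognizes that Lie simplicity legitimizes the use of Corollary \ref{improper}.
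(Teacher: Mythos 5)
Your proposal is correct and follows essentially the same route as the paper's (much terser) proof: sequence \eqref{3} together with Definition \ref{Lie exterior square} for the identification $K\wedge^L K\cong\tilde M(K)$ with trivial Lie product, then Proposition \ref{trivial} and Corollary \ref{improper} for the two displayed isomorphisms. You merely supply the details the paper leaves implicit, namely $Hom(\wedge^2K,\C^*)\cong\wedge^2K\cong M(K)$ for finite abelian $K$ in part (1) and $\wedge^2K\cong[K,K]$ from the vanishing of $M(K)$ in part (2).
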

\begin{proof}
By sequence (\ref{3}) and Definition \ref{Lie exterior square}, we have $K\wedge^L K \cong  \tilde{M}(K)$ with trivial multiplicative Lie algebra structure. 
The rest follows from the Proposition \ref{trivial} and Corollary \ref{improper}.
\end{proof}

\begin{corollary}\label{Simple com}
Let $K$ be a group with trivial Schur multiplier and a multiplicative Lie algebra structure $\star$ such that $[K, K]$ is a  non abelian  simple group. Then $\tilde{M}(K)$ is trivial and $K\wedge^L K \cong [K, K]$ with the improper multiplicative Lie algebra structure.
\end{corollary}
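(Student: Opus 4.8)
The plan is to prove the two assertions in turn, using throughout the two hypotheses: $M(K)$ is trivial, and $[K,K]$ is non-abelian simple and therefore \emph{perfect} (a non-abelian simple group coincides with its own commutator subgroup).

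First I would compute $\tilde M(K)=H^2_{ML}(K,\C^*)$. Since $M(K)=H^2(K,\C^*)$ is trivial, the map $\tilde p$ in Remark \ref{sequence} lands in the trivial group, so the exact sequence there collapses to an isomorphism $\tilde M(K)\cong Hom\big(\tfrac{\wedge^2K}{J},\C^*\big)$, with $J$ the normal subgroup of $\wedge^2K$ from Proposition \ref{kernel}. I would then exploit that triviality of $M(K)$ makes $\chi$ in sequence (\ref{1}) an isomorphism $\wedge^2K\cong[K,K]$. Hence $\wedge^2K$ is perfect, and so is every quotient $\tfrac{\wedge^2K}{J}$ (a quotient of a perfect group is perfect). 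As any homomorphism from a perfect group into the abelian group $\C^*$ is trivial, we get $Hom\big(\tfrac{\wedge^2K}{J},\C^*\big)=1$, whence $\tilde M(K)=1$. A pleasant feature of this route is that it never requires computing $J$ explicitly.

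Next, for the Lie exterior square I would substitute $\tilde M(K)=1$ into the short exact sequence (\ref{3}) to obtain an isomorphism of multiplicative Lie algebras $K\wedge^L K\cong (K\star K)[K,K]$. To simplify the right-hand side I would show $K\star K\subseteq[K,K]$: by Proposition \ref{equivarient} the structure $\star$ is given by a $K$-equivariant homomorphism $\psi:[K,K]\to K$ with $a\star b=\psi([a,b])$, so $K\star K=\psi([K,K])$ is the homomorphic image of a perfect group, hence a perfect subgroup of $K$, hence contained in $[K,K]$. Therefore $(K\star K)[K,K]=[K,K]$ and $K\wedge^L K\cong[K,K]$ as multiplicative Lie algebras, the structure on $[K,K]$ being the restriction of $\star$.

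Finally I would identify this restricted structure as improper, which I expect to be the most delicate step. Since $[K,K]$ is simple, $\ker\psi$ is either all of $[K,K]$ or trivial. If $\ker\psi=[K,K]$ then $\psi\equiv 1$ and $\star$ is the trivial structure. Otherwise $\psi$ is an injective $K$-equivariant endomorphism of the finite group $[K,K]$, hence a $K$-equivariant automorphism; because $\psi$ commutes with conjugation by every element of $[K,K]$ and $[K,K]$ is centerless, the relation $\mathrm{inn}_{\psi(s)}=\psi\,\mathrm{inn}_s\,\psi^{-1}=\mathrm{inn}_s$ forces $\psi(s)s^{-1}\in Z([K,K])=1$, so $\psi=\mathrm{id}$ and $\star$ is the commutator structure. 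In both cases $\star$ is improper, and so is its restriction to $[K,K]$ transported by the isomorphism above; this completes the proof. The crux is precisely this rigidity of $K$-equivariant endomorphisms of a centerless simple group, since $\tilde M(K)=1$ alone does not pin down the Lie structure carried by $K\wedge^L K$.
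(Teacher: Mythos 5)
Your proof is correct, but it routes around the paper's central computation. The paper, after reducing to $\tilde M(K)\cong Hom\big(\tfrac{\wedge^2K}{J},\C^*\big)$ exactly as you do, spends its effort proving $J=\{1\}$ by a case analysis on the $K$-equivariant homomorphism $\phi:[K,K]\to K$ (trivial versus injective, using $\phi([x\star y,{}^yz][y\star z,{}^zx][z\star x,{}^xy])=1$), and only then concludes triviality --- a conclusion which still tacitly needs $Hom([K,K],\C^*)=1$, i.e.\ perfectness of $[K,K]$. You observe that perfectness alone already kills $Hom\big(\tfrac{\wedge^2K}{J},\C^*\big)$ for \emph{any} $J$, since $\wedge^2K\cong[K,K]$ by sequence~(\ref{1}) and quotients of perfect groups are perfect; this is shorter and makes the value of $J$ irrelevant. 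What the paper's route buys is the explicit identification $J=\{1\}$, which is the template reused in the later propositions (Klein four group, $\Z_m\times\Z_n$, $D_n$, $Q_n$) where $M(K)$ need not vanish and $J$ genuinely matters. Conversely, your treatment of the second assertion is more complete than the paper's: the paper simply asserts from sequence~(\ref{3}) that $K\wedge^LK\cong[K,K]$ with improper structure, whereas you actually verify $(K\star K)[K,K]=[K,K]$ (via $K\star K=\psi([K,K])$ being perfect) and prove improperness by the rigidity argument that a $K$-equivariant injective endomorphism of the centerless simple group $[K,K]$ must be the identity, so $\star$ is either trivial or the commutator. That last step is a genuine addition: nothing in $\tilde M(K)=1$ by itself forces the Lie structure on $K\wedge^LK$ to be improper, and your argument supplies the missing justification. (Minor caveat: you invoke finiteness of $[K,K]$ to upgrade an injective endomorphism to an automorphism; this is harmless here since $\tilde M$ is defined for finite $K$, and could be avoided by noting that $\psi([K,K])$ is a nontrivial normal subgroup of the simple group $[K,K]$.)
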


\begin{proof}
	Since the Schur multiplier of $K$ is trivial, $K\wedge K \cong [K,K].$ By Remark \ref{sequence} $\tilde{M}(K) \cong Hom(\frac{[K,K]}{J},\C^*),$ where $J$ is the subgroup generated by $ \{((x\star y)\wedge^yz)((y\star z)\wedge^zx)((z\star x)\wedge^xy)\mid x, y, z \in K\}. $ Also, by Proposition \ref{equivarient}, the multiplicative Lie algebra structure $\star$ is uniquely determined by a $K$-equivariant homomorphism $\phi:[K,K] \rightarrow K.$ Since $[K,K]$ is simple, either $\phi$ is trivial or injective.
	
 If $\phi$ is trivial, then $J = \{1\}.$
 
Now, suppose $\phi$ is injective. Since $\phi([x\star y,^yz]) = (x\star y)\star ^yz,$, we have  $\phi([x\star y,^yz][y\star z,^zx][z\star x,^xy])  = 1 .$ So $J = 1.$ Hence, $\tilde{M}(K)$ is trivial. 

By sequence (\ref{3}) and Definition \ref{Lie exterior square}, we have $K\wedge^L K \cong  [K, K]$ with the improper multiplicative Lie algebra structure.
\end{proof}

\begin{corollary}
Let $K$ be a non abelian simple group with trivial Schur multiplier  and a multiplicative Lie algebra structure $\star$. Then $ \tilde{M}(K)$ is trivial and $K\wedge^L K \cong K$ with the improper multiplicative Lie algebra structure.
\end{corollary}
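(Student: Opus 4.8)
The plan is to reduce this statement to Corollary \ref{Simple com}, which has just been proved. The key observation is that the final corollary is essentially a special case: here $K$ itself is a non-abelian simple group with trivial Schur multiplier, whereas Corollary \ref{Simple com} requires only that $[K,K]$ be a non-abelian simple group. So first I would verify that the hypotheses of Corollary \ref{Simple com} are met. Since $K$ is non-abelian and simple, its commutator subgroup $[K,K]$ is a nontrivial normal subgroup of $K$, hence equals $K$ itself by simplicity. Thus $[K,K] = K$ is a non-abelian simple group, and $K$ has trivial Schur multiplier by assumption. Every hypothesis of the earlier corollary is therefore satisfied.

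Having checked this, I would simply apply Corollary \ref{Simple com} directly. It yields that $\tilde{M}(K)$ is trivial and that $K \wedge^L K \cong [K,K]$ with the improper multiplicative Lie algebra structure. The only remaining step is to substitute $[K,K] = K$ into this conclusion, giving $K \wedge^L K \cong K$ with the improper structure, exactly as claimed.

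I do not anticipate any genuine obstacle, since this is a corollary-of-a-corollary whose entire content is the substitution $[K,K] = K$. The one point worth stating explicitly for rigor is why $[K,K] = K$: a simple group has no proper nontrivial normal subgroups, $[K,K]$ is always normal, and $[K,K] \neq \{1\}$ precisely because $K$ is non-abelian; hence $[K,K] = K$. Everything else—the vanishing of the subgroup $J$ via the injectivity or triviality of the $K$-equivariant homomorphism $\phi:[K,K]\to K$, the identification of $\tilde{M}(K)$ as $\mathrm{Hom}([K,K]/J,\C^*)$, and the structure of the Lie exterior square from sequence (\ref{3})—is inherited wholesale from the proof of Corollary \ref{Simple com} and need not be repeated.
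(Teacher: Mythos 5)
Your proposal is correct and follows exactly the paper's route: the paper's entire proof is ``Follows from Corollary \ref{Simple com}.'' You have merely made explicit the one verification the paper leaves implicit, namely that $[K,K]=K$ for a non-abelian simple group, which is a worthwhile addition but not a different argument.
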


\begin{proof}
 Follows from Corollary \ref{Simple com}. 
\end{proof}

Now, we calculate the Schur multiplier of the Klein four group, the direct product of two cyclic group, the Dihedral group and the Quaternion group with arbitrary multiplicative Lie algebra structures from \cite{MS}.

\begin{proposition}\label{Klein}
Consider the Klein four group $V_4 = \langle a, b \mid a^2 = 1 = b^2, ab = ba\rangle$ with multiplicative Lie algebra structure $\star$ given by  $a \star b = a.$ Then $$ \tilde{M}(V_4)\cong V_4.$$
\end{proposition}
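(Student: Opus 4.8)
The plan is to read off $\tilde M(V_4) = H^2_{ML}(V_4,\C^*)$ from the exact sequence of Remark \ref{sequence},
$$ 1 \longrightarrow Hom\Big(\frac{\wedge^2 V_4}{J},\C^*\Big) \longrightarrow H^2_{ML}(V_4,\C^*) \overset{\tilde p}\longrightarrow H^2(V_4,\C^*), $$
after pinning down each of the three outer terms. Since $V_4 \cong \Z/2 \times \Z/2$ is abelian, Proposition \ref{direct} gives $H^2(V_4,\C^*) = M(V_4) \cong M(\Z/2) \times M(\Z/2) \times Hom(\Z/2,\Z/2) \cong \Z/2$, and the nonabelian exterior square of an abelian group is the ordinary one, so $\wedge^2 V_4 \cong \Z/2$, generated by $a \wedge b$. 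Thus both the kernel and the target of $\tilde p$ will turn out to be copies of $\Z/2$, and the whole problem reduces to (i) identifying $J$, (ii) proving $\tilde p$ is onto, and (iii) deciding whether the resulting order-$4$ group is $V_4$ or $\Z/4$.

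First I would make the Lie product and the wedge explicit. From $a \star b = a$ together with bilinearity and alternation (which hold because $V_4$ is abelian, via the defining axioms and Proposition \ref{MLA}(2)), one gets $x \star y = a$ whenever $x,y$ are distinct non-identity elements and $x \star y = 1$ otherwise; in particular $V_4 \star V_4 = \langle a\rangle$. The identical pattern holds for $\wedge$ valued in $\wedge^2 V_4 = \langle a \wedge b\rangle$. Next I would compute the generators $((x\star y)\wedge z)((y\star z)\wedge x)((z\star x)\wedge y)$ of $J$ from Proposition \ref{kernel}. Each factor lies in the order-$2$ group $\wedge^2 V_4$ and is nontrivial only when the relevant $\star$-value is $a$ and the wedged element lies outside $\langle a\rangle$; running through the few triples $x,y,z$ shows the nontrivial factors always occur in pairs and cancel, so every generator is trivial and $J = \{1\}$. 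Hence $Hom(\wedge^2 V_4/J,\C^*) = Hom(\Z/2,\C^*) \cong \Z/2$, and the sequence already forces $|\tilde M(V_4)| \le 4$ together with a distinguished $\Z/2$ sitting inside.

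The main obstacle is the right-hand end: the sequence of Remark \ref{sequence} does not assert surjectivity of $\tilde p$, and even granting surjectivity one must still rule out $\tilde M(V_4) \cong \Z/4$. To handle both at once I would exhibit an explicit multiplicative Lie $2$-cocycle lifting the nontrivial class of $M(V_4)$: take a group cocycle $f$ representing the generator of $H^2(V_4,\C^*)$ and solve the system in Definition \ref{cocycle} for a compatible $h$, using conditions (2), (3), (5) to propagate $h$ off a few base values and then verifying the Jacobi-type condition (4); solvability is expected because $\C^*$ is divisible and $V_4$ is small. Showing that this lift can be chosen of order $2$ in $H^2_{ML}(V_4,\C^*)$ then splits the sequence, and since $H^2_{ML}(V_4,\C^*)$ is abelian the two commuting copies of $\Z/2$, namely the kernel and the image of the section, yield $\tilde M(V_4) \cong \Z/2 \times \Z/2 \cong V_4$. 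Because $V_4$ is so small, a safe fallback for verifying the splitting is to parametrize $Z^2_{ML}(V_4,\C^*)$ and $B^2_{ML}(V_4,\C^*)$ directly from Definition \ref{cocycle} and compute the quotient by hand, which simultaneously establishes surjectivity of $\tilde p$ and that every class has order dividing $2$.
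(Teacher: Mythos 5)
Your proposal follows essentially the same route as the paper: the exact sequence from Remark \ref{sequence}, the check that $J=\{1\}$ by running through the triples, surjectivity of $\tilde{p}$ via an explicit lift $(f,h)$, and a splitting of the resulting short exact sequence to conclude $\tilde{M}(V_4)\cong \Z_2\times\Z_2\cong V_4$. The one step you leave as "expected" rather than executed --- the existence of a compatible $h$ for the nontrivial class of $f$ --- is precisely what the paper carries out, deriving $h(a,b)=\sqrt{f(a,a)^{-1}}$ from identities (2)--(4) of Definition \ref{cocycle}, so your plan closes exactly as anticipated.
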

\begin{proof}
By Remark \ref{sequence}, we have $$1 \longrightarrow  Hom\bigg(\frac{M(V_4)}{J},\C^*\bigg) \longrightarrow \tilde{M}(V_4)\overset{\tilde{p}}\longrightarrow M(V_4),$$ where $J$ is the subgroup of $\wedge^2V_4$ generated by $$ \{((x\star y)\wedge^yz)((y\star z)\wedge^zx)((z\star x)\wedge^xy)\}. $$

\textbf{Claim:} $J = \{1\}$

For $ x = a , y = a , z = ab $, we have

$ ((a \star a) \wedge ab)((a \star ab) \wedge a)((ab \star a) \wedge a) = (1 \wedge ab)(a \wedge a)(a \wedge a) = 1.$ 

For $ x = a , y = b , z = ab $, we have

$ ((a \star b) \wedge ab)((b \star ab) \wedge a)((ab \star a) \wedge b) = (a \wedge ab)(a \wedge a)(a \wedge b) = (a \wedge b)(a \wedge b) = 1.$

Similarly, for any combination of $x, y$ and $z$, we can see that $$ ((x\star y)\wedge^yz)((y\star z)\wedge^zx)((z\star x)\wedge^xy) = 1.$$ Hence  $J = \{1\}.$

\textbf{Claim:} $\tilde{p}$ is surjective.

By identities (2) and (3) of Definition \ref{cocycle}, it is easy to see that $$ h(a,ab) = h(a,b) = h(ab,b) $$
$$h(b,ab)= h(ab,a) = h(b,a).$$

By using identity (4), we have $$ h(a,b)h(a,b)f(a,a) = 1 \hspace{4cm}  (i) $$

Now, since $ h(ab,ab) = 1 $ , we have $$ h(b,a)h(a,b)f(a,a) = 1 \hspace{4cm} (ii) $$ 

On comparing (i) and (ii), we have $h(a, b) = h(b, a)$. Therefore, $$ h(a,ab)  = h(ab,b) = h(b,ab)= h(ab,a) = h(b,a)  = h(a,b) = \sqrt{f(a,a)^{-1}}$$

Thus for any $f \in M(V_4)$, we have a map $h : V_4\times V_4 \to \mathbb{C}^*$ defined by $h(a,b) = \sqrt{f(a,a)^{-1}}$. So, the homomorphism $\tilde{p}$ is surjective. Hence, we have a short exact sequence 
$$1 \longrightarrow  Hom\left(\wedge^2V_4,\C^*\right) \longrightarrow \tilde{M}(V_4)\overset{\tilde{p}}\longrightarrow M(V_4)\longrightarrow 1$$

Now, the map $t: M(V_4) \to \tilde{M}(V_4)$ defined by $t(\bar{f}) =\bar{(f, h)}$ gives a splitting. Hence $\tilde{M}(V_4) \cong M(V_4) \times Hom(M(V_4),\C^*) \cong \Z_2 \times \Z_2\cong V_4.$
\end{proof}

\begin{remark}
Consider the Klein four group $V_4 = \langle a, b \mid a^2 = 1 = b^2, ab = ba\rangle$ with trivial multiplicative Lie algebra structure.  Then from the short exact sequence $(\ref{3})$, we have $$  V_4 \wedge^L V_4 \cong \tilde{M}(V_4) \cong V_4 $$ with trivial multiplicative Lie algebra structure.
\end{remark}

\begin{proposition}\label{Lie Klein}
	Consider the Klein four group $V_4 = \langle a, b \mid a^2 = 1 = b^2, ab = ba\rangle$ with multiplicative Lie algebra structure $\star$ given by  $a \star b = a.$ Then $$  V_4 \wedge^L V_4 \cong \Z_2 \times \Z_2 \times \Z_2 $$ with trivial multiplicative Lie algebra structure.
\end{proposition}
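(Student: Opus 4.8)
The plan is to read off the structure of $V_4\wedge^L V_4$ from the short exact sequence (\ref{3}),
$$1 \rightarrow \tilde{M}(V_4) \rightarrow V_4\wedge^L V_4 \rightarrow (V_4\star V_4)[V_4,V_4] \rightarrow 1,$$
by first identifying its two outer terms and then pinning down the middle term both as a group and together with its Lie product. The strategy is to combine the already-computed Schur multiplier with a direct computation of the right-hand term, and then to show that the resulting extension of order $8$ is elementary abelian.

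First I would compute the quotient $(V_4\star V_4)[V_4,V_4]$. Since $V_4$ is abelian, every conjugation is trivial, so axioms (2) and (3) in the definition of a multiplicative Lie algebra, together with $x\star x=1$, force $\star$ to be an alternating bi-additive map $V_4\times V_4\to V_4$; it is therefore completely determined by the single value $a\star b=a$. A short check then shows $x\star y\in\{1,a\}$ for every pair, so $V_4\star V_4=\langle a\rangle\cong\Z_2$, while $[V_4,V_4]=1$; hence the quotient is $\langle a\rangle\cong\Z_2$. Combining this with $\tilde{M}(V_4)\cong V_4\cong\Z_2\times\Z_2$ from Proposition \ref{Klein}, the sequence (\ref{3}) gives $|V_4\wedge^L V_4|=4\cdot 2=8$.

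Next I would determine the group structure from the defining relations of Definition \ref{Lie exterior square}, all conjugations being trivial. For commutativity I would invoke relation (8), which yields $[[a,b]_0,[u,v]_0]=[[a,b],[u,v]]_0=1$, and relation (5), which identifies the group commutator $[a\wedge b,u\wedge v]$ with $[[a,b]_0,u\wedge v]$; since the wedge symbols span a cyclic subgroup, these commutators vanish, so every pair of generators commutes and the group is abelian. For the exponent I would use relation (6), which in bi-additive form gives $a^2\wedge b=(a\wedge b)^2=1$, and relation (7), which analogously forces $[a,b]_0^2=1$. An abelian group of order $8$ and exponent $2$ is $\Z_2\times\Z_2\times\Z_2$; equivalently, since $\chi(a\wedge b)=a\star b=a$ generates the quotient $\Z_2$ and $(a\wedge b)^2=1$, the element $a\wedge b$ splits the sequence, giving $V_4\wedge^L V_4\cong \tilde{M}(V_4)\times\langle a\wedge b\rangle\cong\Z_2\times\Z_2\times\Z_2$. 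It then remains to confirm the Lie product is trivial: by relation (9), $(a\wedge b)\overset{\sim}{\star}(u\wedge v)=(a\star b)\wedge(u\star v)$ is a wedge of elements of $\{1,a\}$, hence equals $1$ (using $a\wedge a=1$), and by relation (5) the Lie products involving the bracket symbols coincide with group commutators, which vanish by abelianness; so $\overset{\sim}{\star}$ is identically trivial.

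The step I expect to be the main obstacle is the group-structure determination: the presentation in Definition \ref{Lie exterior square} is intricate, and one must verify simultaneously that the extension splits and that the exponent is exactly $2$, so as to rule out $\Z_4\times\Z_2$. The delicate point is reconciling the rank-$3$ conclusion—forced by the order count together with $\tilde{M}(V_4)\cong\Z_2\times\Z_2$—with the comparatively few surviving generators after the relations collapse the wedge and bracket symbols, which is exactly why exhibiting the splitting via the square-trivial lift $a\wedge b$ of the quotient generator is the cleanest way to close the argument.
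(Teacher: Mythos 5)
Your proposal is correct and follows essentially the same route as the paper: use the short exact sequence (\ref{3}) with $\tilde{M}(V_4)\cong V_4$ and $(V_4\star V_4)[V_4,V_4]\cong\Z_2$ to get order $8$, deduce commutativity and exponent $2$ from the defining relations of Definition \ref{Lie exterior square}, and kill the Lie product via relations (5) and (9). Your write-up is in fact a bit more explicit than the paper's at the two points it leaves terse (why every element has order $2$, via bi-additivity of $\wedge$ and $[\,\cdot,\cdot\,]_0$ on the abelian group $V_4$, and why the wedge symbols all lie in the cyclic subgroup $\langle a\wedge b\rangle$), and the splitting via $a\wedge b$ is a clean alternative to the paper's elimination of $\Z_4\times\Z_2$.
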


\begin{proof}
Since  $ \tilde M(V_4) \cong V_4 $ and $ (V_4 \star V_4)[V_4,V_4] \cong \Z_2 ,$ from the sequence $(\ref{3})$, we have
$$ 1 \rightarrow V_4 \rightarrow V_4 \wedge^L V_4 \rightarrow \Z_2 \rightarrow 1. $$
Since  $ \tilde M(V_4) \cong V_4 $ and $ (V_4 \star V_4)[V_4,V_4] \cong \Z_2 ,$ we have $ | V_4 \wedge^L V_4 | = 8 .$
From $(5)$ and $(8)$ of Definition \ref{Lie exterior square}, we have $ (a\wedge b)(u\wedge v) = (u\wedge v)(a\wedge b), [a,b]_0(u\wedge v) = (u\wedge v) [a,b]_0, $ and  $ [a,b]_0 [u,v]_0 =  [u,v]_0 [a,b]_0.$ So, $ V_4 \wedge^L V_4 $ is an abelian group of order $8$ which contains $V_4.$
This implies $ V_4 \wedge^L V_4 \cong \Z_2 \times \Z_2 \times \Z_2 $ or $ V_4 \wedge^L V_4 \cong \Z_4 \times \Z_2 .$ But from the Definition \ref{Lie exterior square},   order of every non trivial  element in $ V_4 \wedge^L V_4 $ is $2.$ Hence, $  V_4 \wedge^L V_4 \cong \Z_2 \times \Z_2 \times \Z_2 .$

Moreover, from $(5)$ and $(9)$ of Definition \ref{Lie exterior square}, we have $ [a,b]_0 \tilde{\star} [u,v]_0 = [a,b]_0\tilde{\star} (u\wedge v) = (a\wedge b)\tilde{\star} (u\wedge v) = 1. $ Hence $  V_4 \wedge^L V_4  $ is trivial multiplicative Lie algebra.
\end{proof}

\begin{proposition}\label{abeian}
	Consider the abelian group $K= \Z_m \times \Z_n $ of order $mn$ with a multiplicative Lie algebra structure $\star$. Then $$  \tilde{M}(K) = \Z_d \times \Z_d $$ where $ d = gcd(m,n). $
\end{proposition}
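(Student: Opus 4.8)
The plan is to compute $\tilde{M}(K)$ for $K = \Z_m \times \Z_n$ by analyzing the exact sequence from Remark \ref{sequence}, namely
$$1 \longrightarrow Hom\bigg(\frac{\wedge^2 K}{J},\C^*\bigg) \longrightarrow \tilde{M}(K) \overset{\tilde{p}}\longrightarrow H^2(K,\C^*),$$
and then trying to upgrade it to a split short exact sequence as was done for the Klein four group in Proposition \ref{Klein}. Since $K$ is abelian, $[K,K] = 1$, so every multiplicative Lie algebra structure $\star$ on $K$ is a map $K \times K \to K$ whose image $K \star K$ lies in $[K,K] = 1$ by Proposition \ref{MLA}(4) together with the fact that $(x \star y)$ is a commutator-like element; more directly, $K \star K \subseteq K$ must be a subgroup killed by all the Lie identities, and for an abelian group the structure is heavily constrained. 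First I would pin down exactly which structures $\star$ are available on $\Z_m \times \Z_n$ (this is the input from \cite{MS}) and record that $M(K) = M(\Z_m \times \Z_n) \cong Hom(\Z_m, \Z_n) \cong \Z_d$ by Proposition \ref{direct}, where $d = \gcd(m,n)$, since $M(\Z_m) = M(\Z_n) = 1$ for cyclic groups.

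Next I would compute the kernel term $Hom(\wedge^2 K / J, \C^*)$. For the abelian group $K = \Z_m \times \Z_n$ the non-abelian exterior square $\wedge^2 K$ reduces to the ordinary exterior square $\Lambda^2 K \cong \Z_d$ (this is the standard computation underlying $M(K) \cong \Z_d$). The subgroup $J$ is generated by the Jacobi-type elements $((x\star y)\wedge{}^y z)((y\star z)\wedge{}^z x)((z\star x)\wedge{}^x y)$. Because $K$ is abelian, conjugation ${}^y z = z$ is trivial, so these generators simplify to $(x\star y)\wedge z \cdot (y \star z)\wedge x \cdot (z\star x)\wedge y$. I would argue, by evaluating on generators $a, b$ of the two cyclic factors and using bilinearity of $\wedge$, that $J$ is trivial (mirroring the explicit check in Proposition \ref{Klein}), so that the kernel is $Hom(\wedge^2 K, \C^*) = Hom(\Z_d, \C^*) \cong \Z_d$.

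Then I would show $\tilde{p}$ is surjective and splits, so that $\tilde{M}(K) \cong M(K) \times Hom(\wedge^2 K, \C^*) \cong \Z_d \times \Z_d$. Surjectivity amounts to constructing, for each group $2$-cocycle $f$, a compatible map $h$ making $(f,h)$ a multiplicative Lie $2$-cocycle; this follows the template of Proposition \ref{Klein}, where one solves the identities (1)--(5) of Definition \ref{cocycle} for $h$ explicitly (in the Klein case via $h = \sqrt{f(a,a)^{-1}}$), using that the conjugation actions trivialize. The splitting map $t(\bar f) = \overline{(f,h)}$ then yields the direct product decomposition.

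The main obstacle I anticipate is the surjectivity-and-splitting step: one must verify that the system of cocycle conditions in Definition \ref{cocycle} is genuinely solvable for a suitable $h$ for an \emph{arbitrary} structure $\star$ on $\Z_m \times \Z_n$, not just the single structure handled for $V_4$. The identities involve the Lie product $\star$ nontrivially through terms like $f(y, x\star z)$ and $f(x\star y, {}^y(x\star z))$, so the difficulty is controlling these as $\star$ ranges over all admissible structures from \cite{MS}; I would need to show that whatever the image $K \star K$ is, the resulting constraints on $h$ remain consistent and admit a solution lifting every $f$, which is what ultimately forces the clean answer $\Z_d \times \Z_d$ independent of the particular $\star$.
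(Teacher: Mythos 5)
Your overall strategy coincides with the paper's: use the exact sequence of Remark \ref{sequence}, show the subgroup $J$ is trivial so the kernel is $Hom(\wedge^2K,\C^*)\cong\Z_d$, and then prove that $\tilde{p}$ is surjective and split to get $\tilde{M}(K)\cong M(K)\times Hom(\wedge^2K,\C^*)\cong\Z_d\times\Z_d$. The computation of $M(K)\cong\Z_d$, the identification $\wedge^2K\cong\Z_d$, and the bilinearity argument for $J=\{1\}$ are all fine and match what the paper does. However, there is a genuine gap: the surjectivity of $\tilde{p}$, which you yourself flag as ``the main obstacle,'' is precisely where all the substance of the proof lies, and you do not carry it out. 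The paper resolves it by a concrete computation: identities (1)--(3) and (5) of Definition \ref{cocycle} force every value $h(a^pb^q,a^rb^s)$ to be expressible in terms of $h(a,b)$ and values of $f$, with $h(a^p,b^r)$ essentially $h(a,b)^{pr}$; then identity (4) applied to a suitable triple (together with $h(ab,ab)=1$) produces an equation of the form $h(a,b)^2=(\text{explicit expression in }f)^{-1}$, so that for any $2$-cocycle $f$ one can \emph{choose} $h(a,b)$ as a square root and obtain a genuine multiplicative Lie $2$-cocycle $(f,h)$ lifting $f$. Without exhibiting this consistency and solvability, the claimed short exact sequence and its splitting are unproved, and the conclusion $\Z_d\times\Z_d$ does not follow.

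A secondary error worth correcting: you assert that for abelian $K$ the image $K\star K$ lies in $[K,K]=1$, citing Proposition \ref{MLA}(4). That identity only gives $[(x\star y),z]=([x,y]\star z)=1\star z=1$, i.e.\ that $x\star y$ is central; it does \emph{not} force $x\star y=1$. Indeed the proposition you are proving concerns the nontrivial structures $a\star b=a^ib^j$ on $\Z_m\times\Z_n$, and the paper's Klein four example has $a\star b=a\neq 1$ even though $[V_4,V_4]=1$. You partially walk this back by deferring to the classification in \cite{MS}, which is the correct move, but the initial claim as stated would (falsely) reduce the whole proposition to the trivial-structure case.
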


\begin{proof}
Suppose $K = \langle a, b \mid a^m = b^n =  1, ab = ba\rangle$ and  $a\star b = a^{i}b^{j}$, for some suitable $1\leq i \leq m, 1\leq j \leq n$ (for details see [Theorem 2.5 (2),  \cite{MS}]).
	
	By Remark \ref{sequence}, we have $$1 \longrightarrow  Hom\bigg(\frac{M(\Z_m \times \Z_n)}{J},\C^*\bigg) \longrightarrow \tilde{M}(\Z_m \times \Z_n)\overset{\tilde{p}}\longrightarrow M(\Z_m \times \Z_n),$$ where $J$ is the subgroup of $\wedge^2(\Z_m \times \Z_n)$ generated by $$ \{((x\star y)\wedge^yz)((y\star z)\wedge^zx)((z\star x)\wedge^xy)\}. $$
	
	\textbf{Claim:} $J = \{1\}.$
	
	For $ x = a , y = b , z = ab $, we have
	
	$ ((a \star b) \wedge ab)((b \star ab) \wedge a)((ab \star a) \wedge b) = (a^ib^j \wedge ab)(a^{-i}b^{-j} \wedge a)(a^{-i}b^{-j} \wedge b) = (b^j \wedge a)\linebreak (a^i \wedge b)(b^{-j}\wedge a)(a^{-i}\wedge b) = 1. $
	
	For $ x = a , y = a , z = ab $, we have
	
	$ ((a \star a) \wedge ab)((a \star ab) \wedge a)((ab \star a) \wedge a) = (1 \wedge ab)(a^ib^j \wedge a)(a^{-i}b^{-j} \wedge a) = 1.$ 
	
	Similarly, for all other combinations of $x, y$ and $z$, we can see that $$ ((x\star y)\wedge^yz)((y\star z)\wedge^zx)((z\star x)\wedge^xy) = 1.$$ 
	Hence  $J = \{1\}.$
	
	\textbf{Claim:} $\tilde{p}$ is surjective.
    
    By identities (1), (2) and (3) of Definition \ref{cocycle}, it is easy to see that 
	
	\begin{enumerate}
		\item $ h(x,1) = h(1,x) = h(x,x) = 1 $ for all $ x \in K. $ 
		This implies that $ h(a^i,a^j) = 1 = h(b^i,b^j) $
		
		\item $ h(a^p,a^qb^r) = h(a^p,b^r)f(a^{-q},a^q)^{-1} f(a^q,a^p\star b^r) f(a^q(a^p\star b^r),a^{-q}) $
		
		\item $ h(a^qb^r,a^p) = h(b^r,a^p)f(a^{-q},a^q)^{-1} f(a^q,b^r\star a^p) f(a^q(b^r\star a^p),a^{-q}) $
		
		\item $ h(b^p,a^qb^r) = h(b^p,a^q)f(a^{-q},a^q)^{-1} f(a^q,a^{-q}) $
		
		\item $ h(a^qb^r,b^p) = h(a^q,b^p) f(a^{-q},a^q)^{-1}f(a^q,a^{-q})$ 
		
		\item $ h(a^pb^q,a^rb^s)= h(a^pb^q,a^r)h(a^pb^q,b^s)f(a^{-r},a^r)^{-1}f(a^r,(a^pb^q)\star b^s)f(a^r((a^pb^q)\star b^s),a^{-r})\linebreak f((a^pb^q)\star a^r,(a^pb^q)\star b^s).   $
	\end{enumerate}

Also, we can see that $ h(a^p,b^r) $ can be written in terms of $ h(a,b)^{pr}. $

Now, since $ h(ab,ab) = 1. $ It can be seen  that $ h(b,a) $ can be written in terms of $ h(a,b). $

By identity (4) of the Definition \ref{cocycle}, we have
$$ h(a^ib^j,b)h(a^{m-i}b^{n-j},b)f(a^ib^j\star b,a^{-i}b^{-j}\star b) = 1. $$

This implies that $$  h(a^i,b)h(a^{m-i},b) f(a^{-i},a^i)^{-1} f(a^i,a^{-i})  f(a^{i},a^{m-1})^{-1}f(a^{m-i},a^{i})f(a^ib^j\star b,a^{-i}b^{-j}\star b) = 1. $$

So, $ h(a,b) $ can be written completely in terms of values of $f.$ Thus for any $f \in M(\Z_m \times \Z_n)$, we have a map $h : (\Z_m \times \Z_n)\times (\Z_m \times \Z_n) \to \mathbb{C}^*$ defined in terms of $f$. 
So, the homomorphism $\tilde{p}$ is surjective. Hence, we have a short exact sequence 
$$1 \longrightarrow  Hom(\wedge^2(\Z_m \times \Z_n),\C^*) \longrightarrow \tilde{M}(\Z_m \times \Z_n)\overset{\tilde{p}}\longrightarrow M(\Z_m \times \Z_n)\longrightarrow 1$$

Now, the map $t: M(\Z_m \times \Z_n) \to \tilde{M}(\Z_m \times \Z_n)$ defined by $t(\bar{f}) =\bar{(f, h)}$ gives a splitting. Hence $\tilde{M}(\Z_m \times \Z_n) \cong M(\Z_m \times \Z_n) \times Hom(M(\Z_m \times \Z_n),\C^*) \cong \Z_d \times \Z_d.$
\end{proof}

\begin{remark}
Proposition \ref{Klein} is the particular  case of Proposition \ref{abeian}. But here, we like to give the proof of Proposition \ref{Klein} so that one can easily understand the proof of the general case.
\end{remark}

\begin{proposition} Consider the Dihedral group $D_n = \langle a, b \mid a^2 = b^n =  1, aba = b^{-1}\rangle$ with a multiplicative Lie algebra structure $\star$.
\begin{enumerate}
\item If $n$ is odd, then $$ \tilde{M}(D_n)\cong \Z_n.$$
\item If $n$ is even, then $$ \tilde{M}(D_n)\cong \Z_2\times \Z_n.$$
\end{enumerate}
\end{proposition}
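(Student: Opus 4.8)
The plan is to follow the template established in Propositions \ref{Klein} and \ref{abeian}. Applying Remark \ref{sequence} with $K = D_n$ and $H = \C^*$ produces the exact sequence
$$1 \longrightarrow Hom\bigg(\frac{\wedge^2 D_n}{J},\C^*\bigg) \longrightarrow \tilde{M}(D_n) \overset{\tilde{p}}\longrightarrow M(D_n),$$
where $J$ is the subgroup of $\wedge^2 D_n$ generated by the elements $((x\star y)\wedge^yz)((y\star z)\wedge^zx)((z\star x)\wedge^xy)$. So the computation reduces to three tasks: showing $J=\{1\}$, showing that $\tilde{p}$ is surjective (so the sequence extends to a short exact sequence and splits), and identifying $M(D_n)$ together with $Hom(\wedge^2 D_n,\C^*)$.

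First I would show $J=\{1\}$. Using the explicit classification of the multiplicative Lie algebra structures $\star$ on $D_n$ from \cite{MS}, I would substitute the group generators $x,y,z$ into each generating element $((x\star y)\wedge^yz)((y\star z)\wedge^zx)((z\star x)\wedge^xy)$ of $J$ and simplify with the defining relations of the exterior square, exactly as in Propositions \ref{Klein} and \ref{abeian}. Since $D_n$ is two-generated, this is a finite and routine (if slightly tedious) case check, and I expect each generator of $J$ to collapse to $1$; this gives $Hom(\wedge^2 D_n/J,\C^*)=Hom(\wedge^2 D_n,\C^*)$.

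The step I expect to be the main obstacle is the surjectivity of $\tilde{p}$, which is genuinely needed only when $n$ is even (for $n$ odd it is automatic since $M(D_n)$ is trivial). Given a group $2$-cocycle $f$ representing the nontrivial class in $M(D_n)\cong\Z_2$, I would construct a companion map $h:D_n\times D_n\to\C^*$ so that $(f,h)$ is a multiplicative Lie $2$-cocycle in the sense of Definition \ref{cocycle}. Identities (1)--(3) there force $h$ to be determined by its value on a single generating pair, and identities (4)--(5) let me solve for that value as an explicit square-root expression in the values of $f$, mirroring the computation $h(a,b)=\sqrt{f(a,a)^{-1}}$ of Proposition \ref{Klein}. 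The real work is verifying that this $h$ is well defined and satisfies all five identities simultaneously; once it does, the map $t(\bar f)=\overline{(f,h)}$ splits the sequence, so that
$$\tilde{M}(D_n)\cong M(D_n)\times Hom(\wedge^2 D_n,\C^*).$$

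Finally I would insert the group-theoretic data. The commutator subgroup is $[D_n,D_n]=\langle b^2\rangle$, and a short direct computation in $\wedge^2 D_n$ shows that this exterior square is cyclic, generated by $a\wedge b$, with $|\wedge^2 D_n|=|M(D_n)|\,|[D_n,D_n]|=n$ in both parities; hence $\wedge^2 D_n\cong\Z_n$ and $Hom(\wedge^2 D_n,\C^*)\cong\Z_n$. Since $M(D_n)=\{1\}$ when $n$ is odd and $M(D_n)\cong\Z_2$ when $n$ is even, the displayed isomorphism yields $\tilde{M}(D_n)\cong\Z_n$ for $n$ odd and $\tilde{M}(D_n)\cong\Z_2\times\Z_n$ for $n$ even, as claimed.
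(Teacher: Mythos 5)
Your proposal follows essentially the same route as the paper: the exact sequence from Remark \ref{sequence}, a case check showing $J=\{1\}$ for the structures $a\star b=b^i$ classified in \cite{MS}, surjectivity of $\tilde{p}$ for $n$ even via an explicit square-root construction of $h$ from $f$ (the paper's $h(a,b)=\sqrt{(zw)^{-1}}$), and the splitting $\tilde{M}(D_n)\cong M(D_n)\times Hom(\wedge^2 D_n,\C^*)$ with $\wedge^2 D_n\cong\Z_n$. Your observation that surjectivity is automatic for $n$ odd matches the paper's Case 1, so the plan is sound and consistent with the paper's argument.
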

\begin{proof}
By Theorem 2.5 (3) of \cite{MS}, it is enough to assume that the multiplicative Lie algebra structure is given by $a\star b = b^{i}$, for any fix $1\leq i \leq n$.
	
	By Remark \ref{sequence}, we have $$1 \longrightarrow  Hom\bigg(\frac{\wedge^2(D_n)}{J},\C^*\bigg) \longrightarrow \tilde{M}(D_n)\overset{\tilde{p}}\longrightarrow M(D_n),$$ where $J$ is the subgroup of $\wedge^2D_n$ generated by $$ \{((x\star y)\wedge^yz)((y\star z)\wedge^zx)((z\star x)\wedge^xy)\}. $$
	
	\textbf{Claim:} $J = \{1\}$
	
	For $ x = a, y = b^p, z = ab^q $, we have
	
	$ ((b^p\star a) \wedge ^aab^q )((a\star ab^q) \wedge ^{ab^q}b^p )((ab^q\star b^p) \wedge ^{b^p}a ) = (b^{-pi}\wedge ab^{-q})(b^{pi}\wedge ab^{-2p}) = 1. $
	
	For $ x = y = a, z = b^p $, we have
	
	$ ((a\star a) \wedge ^ab^p )((a\star b^p) \wedge ^{b^p}a )((b^p\star a) \wedge ^aa ) = (b^{pi}\wedge ab^{-2p})(b^{-pi}\wedge a) = 1. $
	
	Similarly, for any combination of $x, y$ and $z$, we can see that $$ ((x\star y)\wedge^yz)((y\star z)\wedge^zx)((z\star x)\wedge^xy) = 1.$$ Hence  $J = \{1\}.$

\noindent\textbf{Case 1: n is odd}

In this case, ${M}(D_n)=\{1\}$. So $\tilde{M}(D_n) \cong Hom (\wedge^2(D_n),\C^*)\cong \mathbb{Z}_n$.

\noindent\textbf{Case 2: n is even} 

First of all, we show that $\tilde{p}$ is surjective. Using $(1),(2) $ and $(3)$ of the Definition \ref{cocycle}, we have the following:
	\begin{enumerate}
		
		\item $ h(x,1) = h(1,x) = h(x,x) = 1 $ for all $ x $ in $ D_n $.
		
		\item $ h(a,ab^j) = h(a,a)h(a,b^j)f(a,a)^{-1}f(a,b^{ji})f(ab^{ji},a)f(1,^ab^{ji}) = h(a,b^j)f(a,a)^{-1}f(a,b^{ji})\linebreak f(ab^{ji},a) $
		
		\item $ h(ab^j,a) = h(b^j,a)h(a,a)f(a,a)^{-1}f(a,b^{-ji})f(ab^{-ji},a)f(^ab^{-ji},1) = h(b^j,a)f(a,a)^{-1} \linebreak f(a,b^{-ji}) f(ab^{-ji},a) $
		
		\item $ h(b^j,ab^k) = h(b^j,a) $
		
		\item $ h(ab^k,b^j) = h(a,b^j) $
		
		\item $ h(ab^j,ab^k) = h(a,b^k)h(b^j,a)f(a,a)^{-2}f(a,b^{-ji})f(ab^{-ji},a)f(a,b^{ki})f(ab^{ki},a)f(b^{ji},b^{-ki}) $
		
		\item Since $ h(ab^j,ab^j) = 1 $,  
$$ h(a,b^j)h(b^j,a)f(a,a)^{-2}f(a,b^{-ji})f(ab^{-ji},a)f(a,b^{ji})f(ab^{ji},a)f(b^{ji},b^{-ji}) = 1 $$

\item $ h(a,b^2) = h(a,b)^2f(b^{-1},b)^{-1}f(b,b^i)f(bb^i,b^{-1})f(b^i,b^i).$ 
Similarly, $h(a,b^j)$ can be written in terms of $h(a,b)^j.$
	\end{enumerate}	
From above identities, it is clear that for any given $f$, image of any elements of $K \times K$  under the map $h$  is determined by the image of $(a, b)$ under the map $h$. 

Now, using $(5)$ of Definition \ref{cocycle}, we have \\
$ h(a,b^{-1}) = h(^aa,^ab) = h(a,b)f(a,a\star b)f(a,a)^{-1}f(a(a\star b),a) = h(a,b)f(a,b^i)f(a,a)^{-1}f(ab^i,a)$

$$h(a,b^{-1})= h(a,b)z, \hspace{4cm} (i) $$ where $ z = f(a,b^i)f(a,a)^{-1}f(ab^i,a) $

Since $ 1 = h(a,1) = h(a,bb^{-1})$, $h(a,b)h(a,b^{-1})f(b^{-1},b)^{-1}f(b,b^{-i})f(b^{-i+1},b^{-1})f(b^i,b^{-i})= 1$. By using (i), we have  $$h(a,b)h(a,b^{-1})w = h(a,b)h(a,b)zw, $$  where $ w = f(b^{-1},b)^{-1}f(b,b^{-i})f(b^{-i+1},b^{-1})f(b^i,b^{-i}).$

So, $$ h(a,b) = \sqrt{(zw)^{-1}}. $$

Thus for any $f \in M(V_4)$, we have a map $h : V_4\times V_4 \to \mathbb{C}^*$ defined by $h(a,b) = \sqrt{(zw)^{-1}}$. So, the homomorphism $\tilde{p}$ is surjective. Hence, we have a short exact sequence 
$$1 \longrightarrow  Hom(\wedge^2D_n,\mathbb{C}^*) \longrightarrow \tilde{M}(D_n)\overset{\tilde{p}}\longrightarrow M(D_n)\longrightarrow 1$$

Now, the map $t: M(D_n) \to \tilde{M}(D_n)$ defined by $t(\bar{f}) =\bar{(f, h)}$ gives a splitting. 

Hence $\tilde{M}(D_n) \cong M(D_n) \times Hom(\wedge^2D_n,\mathbb{C^*}) \cong \mathbb{Z}_2 \times \mathbb{Z}_n.$
\end{proof}

\begin{proposition}
	 Consider the Dihedral group $D_n = \langle a, b \mid a^2 = b^n =  1, aba = b^{-1}\rangle$ (where $n$ is odd) with a multiplicative Lie algebra structure $a\star b = b^i$ for any fix $1\leq i \leq n$. Then $  D_n \wedge^L D_n \cong \Z_n \times \Z_n $ is trivial multiplicative Lie algebra.
\end{proposition}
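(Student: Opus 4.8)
The plan is to feed the short exact sequence (\ref{3}) the two group-theoretic pieces attached to $D_n$, read off that $|D_n\wedge^L D_n|=n^2$, and then use the defining relations of Definition \ref{Lie exterior square} to pin down both the group structure and the Lie product. First I would record the two outer terms. For $n$ odd one has $[D_n,D_n]=\langle b\rangle\cong\Z_n$, and a routine computation from $a\star b=b^i$ together with the bilinearity axioms (items (2) and (3) of the multiplicative Lie algebra definition) shows $x\star y\in\langle b\rangle$ for every $x,y\in D_n$; hence $(D_n\star D_n)[D_n,D_n]=\langle b\rangle\cong\Z_n$. Combining this with $\tilde M(D_n)\cong\Z_n$ from the preceding proposition, sequence (\ref{3}) becomes $1\to\Z_n\to D_n\wedge^L D_n\to\Z_n\to 1$, so $|D_n\wedge^L D_n|=n^2$.

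Next I would analyse the two families of generators separately. Write $W=\langle a\wedge b\mid a,b\in D_n\rangle$ and $L_0=\langle [a,b]_0\mid a,b\in D_n\rangle$. Restricted to the symbols $a\wedge b$ (respectively the symbols $[a,b]_0$), relations (1),(3),(4) of Definition \ref{Lie exterior square} are exactly the defining relations of the non-abelian exterior square, so there are surjections $D_n\wedge D_n\twoheadrightarrow W$ and $D_n\wedge D_n\twoheadrightarrow L_0$. Since $M(D_n)=\{1\}$ for $n$ odd (as noted in the preceding proposition), sequence (\ref{1}) gives $D_n\wedge D_n\cong[D_n,D_n]\cong\Z_n$; hence both $W$ and $L_0$ are cyclic of order dividing $n$, in particular abelian. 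This isomorphism also supplies the vanishing I will exploit: for $x,y\in\langle b\rangle$ the element $x\wedge y$ maps to $[x,y]=1$ under the injection $D_n\wedge D_n\hookrightarrow[D_n,D_n]$, so $x\wedge y=1$ in $D_n\wedge D_n$ and therefore in $W$.

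I would then obtain abelianness and triviality of $\tilde{\star}$ in one stroke from relations (5) and (9). Since $a\star b,\,u\star v\in\langle b\rangle$, relation (9) gives $(a\wedge b)\tilde{\star}(u\wedge v)=(a\star b)\wedge(u\star v)=1$. Because $W$ is abelian, the group commutator $[a\wedge b,u\wedge v]=1$, and then relation (5) forces $[[a,b]_0,u\wedge v]=[a,b]_0\tilde{\star}(u\wedge v)=[a,b]_0\tilde{\star}[u,v]_0=1$ (equivalently, $L_0$ is abelian by relation (8) since $[D_n,D_n]$ is abelian). Thus $W$ and $L_0$ commute and every Lie product of generators is trivial, so $D_n\wedge^L D_n=WL_0$ is abelian with trivial $\tilde{\star}$. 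Finally, from $|W|,|L_0|\le n$ and $|WL_0|=n^2$ I conclude $|W|=|L_0|=n$ and $W\cap L_0=\{1\}$, whence $D_n\wedge^L D_n\cong W\times L_0\cong\Z_n\times\Z_n$, with the trivial multiplicative Lie algebra structure.

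The main obstacle I anticipate is the bookkeeping in the third step: relations (5) and (8) entangle the group commutator with the Lie product $\tilde{\star}$, so one must break the apparent circularity by first establishing, independently of the Lie structure, that $W$ and $L_0$ are quotients of the abelian group $D_n\wedge D_n$. The genuinely essential input is the triviality of the ordinary Schur multiplier $M(D_n)$ for odd $n$: it is exactly what makes $x\wedge y=1$ for $x,y\in[D_n,D_n]$, and this single fact collapses both $(a\wedge b)\tilde{\star}(u\wedge v)$ and the commutators among the generators.
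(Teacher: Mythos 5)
Your proposal is correct, and its skeleton coincides with the paper's: both use sequence (\ref{3}) with $\tilde M(D_n)\cong\Z_n$ and $(D_n\star D_n)[D_n,D_n]=\langle b\rangle\cong\Z_n$ to get $|D_n\wedge^L D_n|=n^2$, and both use relations (5), (8), (9) of Definition \ref{Lie exterior square} to kill the commutators and the Lie product $\tilde\star$. Where you genuinely diverge is in the final identification of the group. The paper argues only that $D_n\wedge^L D_n$ is ``an abelian group of order $n^2$ which contains $\Z_n$'' and concludes it is $\Z_n\times\Z_n$ --- which, as stated, does not rule out $\Z_{n^2}$ or other abelian groups of order $n^2$ when $n$ is not squarefree. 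Your decomposition $D_n\wedge^L D_n=WL_0$, with $W$ and $L_0$ each exhibited as quotients of $D_n\wedge D_n\cong\Z_n$ via the universal property (relations (1), (3), (4) restricted to each family of symbols), pins down $|W|=|L_0|=n$ and $W\cap L_0=\{1\}$ from the order count, and so actually forces the direct product structure. You also derive the abelianness of $W$ from the surjection $\Z_n\twoheadrightarrow W$ rather than from relation (5) alone, which cleanly breaks the circularity between the group commutator and $\tilde\star$ in relation (5); the paper's reading of (5) and (8) is terser on this point. In short: same route, but your version supplies the missing argument for $\cong\Z_n\times\Z_n$ and is the one I would keep. (One microscopic omission: the product $(a\wedge b)\,\tilde\star\,[u,v]_0$ is not listed among the Lie products you kill, but it follows from the antisymmetry $(x\tilde\star y)(y\tilde\star x)=1$ of Proposition \ref{MLA} applied to the case you do treat.)
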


\begin{proof}
    Since $M(D_n) \cong \{1\}$ and $\tilde{M}(D_n) \cong \Z_n$, $D_n \wedge D_n \cong \Z_n$ and by the sequence (\ref{3}), we have $$1 \rightarrow \Z_n \rightarrow D_n \wedge^L D_n \rightarrow D_n \wedge D_n \rightarrow 1.$$ So, $|D_n \wedge^L D_n| = n^2.$ From $(5)$ and $(8)$ of Definition \ref{Lie exterior square}, we have $ (a\wedge b)(u\wedge v) = (u\wedge v)(a\wedge b), [a,b]_0(u\wedge v) = (u\wedge v) [a,b]_0, $ and  $ [a,b]_0 [u,v]_0 =  [u,v]_0 [a,b]_0.$ So, $ D_n \wedge^L D_n $ is an abelian group of order $n^2,$ which contains $\Z_n.$
	This implies $ D_n \wedge^L D_n \cong \Z_n \times \Z_n$.
	
	Moreover, from $(5)$ and $(9)$ of Definition \ref{Lie exterior square}, we have $ [a,b]_0 \tilde{\star} [u,v]_0 = [a,b]_0\tilde{\star} (u\wedge v) = (a\wedge b)\tilde{\star} (u\wedge v) = 1. $ Hence, $  D_n \wedge^L D_n  $ is a trivial multiplicative Lie algebra.
	
\end{proof}

\begin{proposition}
	Consider the quaternion group $ Q_n = \langle a,b : a^2 = b^n, aba^{-1} = b^{-1}\rangle $ of order $4n$ with multiplicative Lie algebra structure $ a\star b = b^i,$ where $1\le i\le 2n-1.$ Then $\tilde{M}(Q_n) \cong \mathbb{Z}_{{\gcd(n,i)}}$.
\end{proposition}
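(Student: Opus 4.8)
The plan is to apply the exact sequence of Remark \ref{sequence} to the pair $(Q_n,\star)$ with coefficients in $\C^*$, giving $1 \longrightarrow Hom(\wedge^2 Q_n/J,\C^*) \longrightarrow \tilde M(Q_n) \os{\tilde p}\longrightarrow M(Q_n)$, where $J \leq \wedge^2 Q_n$ is generated by the elements $((x\star y)\wedge {}^{y}z)((y\star z)\wedge {}^{z}x)((z\star x)\wedge {}^{x}y)$. Since $Q_n$ is dicyclic, every Sylow subgroup is cyclic (for odd primes) or generalized quaternion (for $p=2$), and each of these has trivial Schur multiplier; hence $M(Q_n)=\{1\}$. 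Thus $\tilde p$ has trivial target and $\tilde M(Q_n)\cong Hom(\wedge^2 Q_n/J,\C^*)$. The same vanishing, via sequence (\ref{1}), gives $\wedge^2 Q_n\cong [Q_n,Q_n]$ through $\chi(u\wedge v)=[u,v]$; as $[a,b]=b^{-2}$ and $|b|=2n$, we get $[Q_n,Q_n]=\langle b^2\rangle\cong\Z_n$. So the whole problem collapses to locating $\chi(J)$ inside $\langle b^2\rangle\cong\Z_n$.

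Next I would write down the $\star$-table forced by $a\star b=b^i$ and Proposition \ref{MLA}. Axiom (2) gives $a\star b^q=b^{qi}$; axiom (1) forces $b^p\star b^q=1$; and axiom (3), together with Proposition \ref{MLA}(2), yields $ab^p\star b^q=b^{qi}$, $b^p\star ab^q=b^{-pi}$, and $ab^p\star ab^q=b^{(p-q)i}$. In parallel I would record the group relations ${}^{a}b^k=b^{-k}$, ${}^{b^k}a=ab^{-2k}$, $[b^k,b^m]=1$, and the key identity $[b^k,ab^m]=b^{2k}$.

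With these, a typical generator of $J$ maps under $\chi$ to $[x\star y,{}^{y}z]\,[y\star z,{}^{z}x]\,[z\star x,{}^{x}y]$. Every $\star$-value is a power of $b$, so each factor $[b^k,w]$ is trivial when $w\in\langle b\rangle$ and equals $b^{2k}$ when $w\in a\langle b\rangle$; in particular every factor, and hence every generator, lies in $\langle b^{2i}\rangle$. Splitting a triple $(x,y,z)$ according to which of its entries lie in the coset $a\langle b\rangle$ reduces the determination of $\chi(J)$ to finitely many cases. The goal is to prove $\chi(J)=\langle b^{2i}\rangle=\langle (b^2)^i\rangle$. Since $(b^2)^i$ has index $\gcd(n,i)$ in $\langle b^2\rangle\cong\Z_n$, this gives $\wedge^2 Q_n/J\cong\Z_{\gcd(n,i)}$, whence $\tilde M(Q_n)\cong Hom(\Z_{\gcd(n,i)},\C^*)\cong\Z_{\gcd(n,i)}$.

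The main obstacle I anticipate is the exact determination of $\chi(J)$. The three cyclically arranged factors of each generator tend to annihilate in pairs --- this is the shadow of the Jacobi identity (axiom (4)), which makes the parallel $\star$-expression $((x\star y)\star {}^{y}z)((y\star z)\star {}^{z}x)((z\star x)\star {}^{x}y)$ trivial. The real work is therefore to track the $b$-exponents carefully across the four coset-types, to exhibit triples for which a single factor survives and realises $b^{2i}$ (so that $\langle b^{2i}\rangle\subseteq\chi(J)$), and to check that no value outside $\langle b^{2i}\rangle$ ever occurs (so that $\chi(J)\subseteq\langle b^{2i}\rangle$). Once $J$ is pinned down, the isomorphism $\tilde M(Q_n)\cong\Z_{\gcd(n,i)}$ follows at once.
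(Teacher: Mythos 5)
Your setup matches the paper's: both reduce everything, via the exact sequence of Remark \ref{sequence} and the vanishing of $M(Q_n)$, to identifying the image of $J$ inside $\wedge^2Q_n\cong[Q_n,Q_n]=\langle b^2\rangle\cong\Z_n$. But the proposal stops exactly where the proposition begins: the determination of $\chi(J)$, which you correctly identify as ``the real work,'' is the entire mathematical content here, and you do not carry it out. You only establish the easy inclusion $\chi(J)\subseteq\langle b^{2i}\rangle$ (each factor is $[b^{ki},w]$ with $w$ either centralizing $b$ or in $a\langle b\rangle$, hence a power of $b^{2ki}$); the reverse inclusion is asserted as a goal, not proved. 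As written this is a plan, not a proof.

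Worse, the plan as stated cannot be completed, because the cancellation you describe as something the factors ``tend to'' do is in fact total. Run your own formulas on the triple $x=a$, $y=b^p$, $z=ab^q$ (the only type of triple the paper claims contributes nontrivially): the three factors are $[a\star b^p,{}^{b^p}(ab^q)]=[b^{ip},ab^{q-2p}]=b^{2ip}$, then $[b^p\star ab^q,{}^{ab^q}a]=[b^{-ip},ab^{2q}]=b^{-2ip}$ (note ${}^{ab^q}a=ab^{2q}$ lies in the coset $a\langle b\rangle$, so this factor is \emph{not} trivial --- the paper's Case 1 records it as $1$, which is an error), and $[ab^q\star a,{}^ab^p]=[b^{iq},b^{-p}]=1$; the product is $1$. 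Checking the remaining coset types (zero, one, two, or three entries in $a\langle b\rangle$) the same pairwise cancellation occurs every time, e.g.\ for $x,y,z\in a\langle b\rangle$ the three exponents are $-2i(p-s)$, $-2i(q-p)$, $-2i(s-q)$, summing to $0$. So the computation yields $\chi(J)=\{1\}$, hence $Hom(\wedge^2Q_n/J,\C^*)\cong\Z_n$ rather than $\Z_{\gcd(n,i)}$. There is no triple ``for which a single factor survives and realises $b^{2i}$,'' so the inclusion $\langle b^{2i}\rangle\subseteq\chi(J)$ that your argument needs is false. The gap is therefore not merely an omitted verification: completing the verification honestly contradicts the target $\chi(J)=\langle b^{2i}\rangle$ (and exposes the arithmetic slip in the paper's own Case 1), so the proposal does not establish the proposition.
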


\begin{proof}
	We know that $ M(Q_n) = {1}.$ From the short exact sequence 
	$$ 1 \longrightarrow M(Q_n) \longrightarrow  Q_n \wedge Q_n \longrightarrow [Q_n,Q_n] \longrightarrow 1,$$
    we have $ Q_n \wedge Q_n \cong [Q_n,Q_n] \cong \langle b^2 \rangle \cong \mathbb{Z}_n.$
	By Remark \ref{sequence}, we have $$1 \longrightarrow  Hom\bigg(\frac{\wedge^2(Q_n)}{J},\mathbb{C}^*\bigg) \longrightarrow \tilde{M}(Q_n)\overset{\tilde{p}}\longrightarrow M(Q_n),$$ where $J$ is the subgroup of $\wedge^2Q_n$ generated by $$ \{((x\star y)\wedge^yz)((y\star z)\wedge^zx)((z\star x)\wedge^xy)\}. $$
	
	Since, $ M(Q_n) = 1, $ we have $ \tilde{M}(Q_n) \cong Hom\bigg(\frac{\wedge^2(Q_n)}{J},\mathbb{C}^*\bigg)  \cong Hom\bigg(\frac{\mathbb{Z}_n}{J},\mathbb{C}^*\bigg). $
		
	\textbf{Claim:} $J = \mathbb{Z}_{m}, $ where $ m = \frac{n}{\gcd(n,i)} .$
	
	\noindent\textbf{Case 1:}
	For  $ x =a, y = b^p , z = ab^q$, we have
	
	$ [a\star b^p,^{b^p}ab^q] [b^p\star ab^q,^{ab^q} a] [ab^q\star a,^a b^p]  
	= [b^{ip},^{b^p}ab^q] [b^{-ip},^{ab^q} a] [^ab^{-iq},^ab^p]  $
	
	$ = b^{2ip}\cdot 1 \cdot 1 = b^{2ip}  $
	
	\noindent\textbf{Case 2:}
	For  $ x = y = a, z = b^p $, we have
	
	$ [a\star a,^ab^p] [a\star b^p,^{b^p} a] [b^p\star a,^a a]  
	= [1,^ab^p] [b^{ip},^{b^p} a] [b^{-ip},^a a]  $
	
	$ = 1 \cdot  b^{2i} \cdot  b^{-2i} = 1  $
	
	\noindent\textbf{Case 3:}
	For  $ x = y = a, z = ab^p $, we have
	
	$ [a\star a,^aab^p] [a\star ab^p,^{ab^p} a] [ab^p\star a,^a a] 
	=  [1,^aab^p] [^ab^{ip},^{ab^p} a] [^ab^{-ip},^a a] $
	
	$ = 1 \cdot  b^{-2ip} \cdot  b^{2ip} = 1   $
	
	Similarly, for all other combinations of $x, y$ and $z$, we can see that 
	
	$ ((x\star y)\wedge^yz)((y\star z)\wedge^zx)((z\star x)\wedge^xy) = 1 $ or $b^{2ip} .$
	
	So, $ J = \langle (b^2)^{i} \rangle \cong \mathbb{Z}_\frac{n}{\gcd(n,i)}$. Hence, $\frac{\wedge^2(Q_n)}{J}\cong \mathbb{Z}_{\gcd(n,i)}$. Therefore, $ \tilde{M}(Q_n) \cong  \mathbb{Z}_{\gcd(n,i)}.  $	
\end{proof}

\newpage
Table \ref{table} provides the Schur multiplier of some precise multiplicative Lie algebras.
\begin{table}[H]
	\begin{tabular}{|p{5cm}|p{2cm}|p{2.5cm}|p{2.5cm}|p{2cm}|}
		\hline 
	\smallskip \hspace{1cm}	Group $K$ & \smallskip $\wedge^2K $ & \smallskip $M(K)$ & Multiplicative Lie algebra structure & \smallskip $ \tilde M(K)$ \\
		\cline{1-5}
		
		  $\mathbb{Z}_n$ & trivial & trivial & Lie simple & trivial  \\
		\cline{1-5}

		
		$ V_4 $ & $\mathbb{Z}_2$ & $\mathbb{Z}_2$  & arbitrary & $V_4$ \\	
		\cline{1-5}
%
			$ SL(2,3)$ & $Q_2$ &  trivial & Lie simple & $V_4$ \\
		\cline{1-5}
		
	$Q_n = \langle a, b ~\mid~ a^2 = b^n , aba^{-1} = b^{-1}\rangle$  &  $\mathbb{Z}_n$ & trivial & $a\star b = b^i$&  $\mathbb{Z}_{\gcd(n, i)}$ \\
			\cline{1-5}
			$D_n = \langle a, b ~\mid~ a^2 = b^n = 1, aba = b^{-1}\rangle$  &  $\mathbb{Z}_n$ & trivial if $n$ is odd and  $\mathbb{Z}_2$ if $n$ is even &  arbitrary&  $\mathbb{Z}_n$ if  $n$ is odd and $ \mathbb{Z}_n \times \mathbb{Z}_2$ if $n$ is even. \\
			\cline{1-5}
			
			$ \mathbb{Z}_m \times \mathbb{Z}_n = \langle a, b ~\mid~ a^m = b^n = 1, ab = ba\rangle$ & $\mathbb{Z}_d,$ where $d=(m,n)$ & $\mathbb{Z}_d,$ where $d=(m,n)$ &  arbitrary   & $\mathbb{Z}_d \times \mathbb{Z}_d$ \\
			\cline{1-5}
%
		$Q_2 \times \mathbb{Z}_p$ & $\mathbb{Z}_2$ & trivial &  Lie simple & $\mathbb{Z}_2$ \\
		\cline{1-5}
		\smallskip
		$\langle a, b ~\mid~ a^8 = b^p = 1, a^{-1}ba = b^{-1}\rangle$  & \smallskip $\mathbb{Z}_p$ & \smallskip trivial & \smallskip Lie simple & \smallskip $\mathbb{Z}_p$ \\
		
		\cline{1-5}
		\smallskip
		$\langle a, b ~\mid~ a^8 = b^p = 1, a^{-1}ba = b^{\alpha}\rangle$, where $\alpha$ is a primitive root of $\alpha^4 \equiv 1$ (mod $p$), $4 | (p-1)$  & \smallskip $\mathbb{Z}_p$ & \smallskip trivial & \smallskip  Lie simple& \smallskip $\mathbb{Z}_p$ \\
		
		\cline{1-5}
		\smallskip
		$\langle a, b ~\mid~ a^8 = b^p = 1, a^{-1}ba = b^{\alpha}\rangle$, where $\alpha$ is a primitive root of $\alpha^8 \equiv 1$ (mod $p$), $8 | (p-1)$  & \smallskip $\mathbb{Z}_p$ & \smallskip trivial & \smallskip Lie simple & \smallskip $\mathbb{Z}_p$ \\
		\cline{1-5}
	
		\smallskip
		$\langle a, b, c, d ~\mid~ a^4 = b^2=c^2 = d^p = 1, ab = ba, ac = ca, bc = cb, d^{-1}ad = b, d^{-1}bd = c, d^{-1}cd = ab\rangle$ & \smallskip  $\mathbb{Z}^3_2$  & \smallskip trivial & \smallskip Lie simple  & \smallskip $\mathbb{Z}^3_2$ \\

		\hline
	\end{tabular}	
\caption{\label{table}$\tilde M(K) $ for some  groups with arbitrary multiplicative Lie algebra structure.}
\end{table}

\vspace{.5cm}

\noindent{\bf Acknowledgement:}
The first named author sincerely thanks IIIT Allahabad and University grant commission (UGC), Govt. of India, New Delhi for research fellowship. The second named author sincerely thanks IIIT Allahabad and Ministry of Education, Government of India for providing institute fellowship. 


\end{document}